\newcommand{\sss}{\scriptscriptstyle}
\newcommand{\prob}{\mathbb{P}}
\newcommand{\Prob}[1]{\prob\left(#1\right)}
\newcommand{\CProb}[2]{\prob\left(\left.#1\right|#2\right)}
\newcommand{\expec}{\mathbb{E}}
\newcommand{\Exp}[1]{\expec\left[#1\right]}
\newcommand{\Var}[1]{\textup{Var}\left(#1\right)}
\newcommand{\plim}{\ensuremath{\stackrel{\prob}{\longrightarrow}}}
\newcommand{\dlim}{\ensuremath{\stackrel{d}{\longrightarrow}}}
\newcommand{\ind}[1]{\mathbbm{1}_{\left\{#1\right\}}}
\newcommand{\bigO}[1]{O\left(#1\right)}
\newcommand{\bigOp}[1]{O_{\sss\prob}\left(#1\right)}
\newcommand{\me}{\textup{e}}
\newcommand{\dd}{{\rm d}}
\newtheorem{theorem}{Theorem}
\newtheorem{lemma}[theorem]{Lemma}
\newtheorem{assumption}{Assumption}
\theoremstyle{definition}
\newtheorem{remark}{Remark}
\newcommand{\op}{o_{\sss\prob}}
\newcommand{\Nn}{N^{\sss(n)}}
\newcommand{\Mn}{M^{\sss(n)}}
\numberwithin{equation}{section}
\title{Counting triangles in power-law uniform random graphs}
\author[1]{Pu Gao}
\author[2]{Remco van der Hofstad}
\author[1]{Angus Southwell}
\author[2]{Clara Stegehuis}
\affil[1]{Monash University}
\affil[2]{Eindhoven University of Technology}
\begin{document}
	
	\maketitle
	\begin{abstract}
		We count the asymptotic number of triangles in uniform random graphs where the degree distribution follows a power law with degree exponent $\tau\in(2,3)$. 
		We also analyze the local clustering coefficient $c(k)$, the probability that two random neighbors of a vertex of degree $k$ are connected. We find that the number of triangles, as well as the local clustering coefficient, scale similarly as in the erased configuration model, where all self-loops and multiple edges of the configuration model are removed. Interestingly, uniform random graphs contain more triangles than erased configuration models with the same degree sequence. The number of triangles in uniform random graphs is closely related to that in a version of the rank-1 inhomogeneous random graph, where all vertices are equipped with weights, and the edge probabilities are moderated by asymptotically linear functions of the products of these vertex weights.
	\end{abstract}
	
	\section{Introduction}
	Many real-world networks were found to have degree distributions that can be well approximated by a power-law distribution, so that the fraction of vertices of degree $k$ scales as $k^{-\tau}$ for some $\tau>1$. The degree exponent $\tau$ of several networks was found to satisfy $\tau\in(2,3)$~\cite{vazquez2002}. These power-law real-world networks are often modeled by random graphs. One very natural null model for real-world networks is the uniform random graph with prescribed degrees, that samples a simple graph uniformly from all graphs with the same degree sequence as the original network. By definition, the resulting graph has the same degree sequence as the original network, but whether other properties behave similarly in real-world networks and uniform random graphs is an interesting question. In this paper, we focus on the property of triangle counts. Triangle counts measure the tendency of two neighbors of a vertex to be connected as well, allowing to analyze clustering properties of real-world networks.

	Uniform random graphs with specified degree sequences are among the most commonly studied random graphs models~\cite{wormald1999,molloy1995,janson2009}. Compared with the classical Erd\H{o}s-R\'{e}nyi graphs, analyzing uniform random graphs with specified degree sequences is much more challenging. The probabilities of simple events, such as two given vertices being adjacent, are highly non-trivial to be estimated to a desired accuracy. The most commonly used method for studying these random graphs is to use the configuration model introduced by Bollob\'{a}s~\cite{B1980} (see also~\cite[Chapter 7]{hofstad2009}). The configuration model constructs a random multigraph with a specified degree sequence. Conditioning on the resulting graph being simple, the distribution is uniform. Estimating edge probabilities in the configuration model is easy and thus we may analyze the configuration model and then translate the result to uniform random graphs. Such a translation is possible only if the probability of producing a simple graph by the configuration model is not too small. However, with a power-law degree sequence with exponent $\tau<3$, this probability vanishes~\cite{janson2014,bollobas2015}. Thus, analyzing the configuration model does not help in analyzing uniform random graphs when $\tau\in(2,3)$. 
	
	Other methods for analyzing uniform random graphs rely on asymptotic enumeration of graphs with specified degree sequences. The switching method is useful in enumerating sparse graphs~\cite{mckay1981}, whereas enumerating dense graphs uses multidimensional complex Cauchy integrals and the Laplace method~\cite{mckay1990}. Studies of subgraphs of uniform graphs with specified degree sequences are based on afore-mentioned enumeration results and techniques~\cite{mckay2011,greenhill2018}.
	
	As a compromise, a commonly used practice for $\tau\in(2,3)$ is to use the erased configuration model~\cite{britton2006,berry2015,schlauch2015}. 
	Instead of conditioning on simple graphs, the erased configuration model generates a random multigraph using the configuration model, and then removes all loops and replaces multiple edges by simple edges. Graphs in the erased configuration model are not uniformly distributed. It was believed that many properties can be translated between random graphs in the erased configuration model and the uniform model, although theoretically this is hard to verify. However, with power-law exponent $\tau<3$, there are edge probabilities which differ significantly in the erased configuration model and in the uniform graph model (see Remark~\ref{rem:edgeprob}).
%
%
%
%
	
Another method to generate graphs with approximately the given degree sequence is to use rank-1 inhomogeneous random graphs~\cite{chung2002,boguna2003}. In rank-1 inhomogeneous random graphs, each vertex $i$ is equipped with a weight $w_i$, and pairs of vertices are connected independently with probability $p(w_i,w_j)$ for some function $p(w_i,w_j)$. Under suitable choices for $p(w_i,w_j)$, the expected degree of a vertex with weight $w_i$ is approximately, or equals $w_i$. Thus, on average, every vertex has degree equal to its targeted degree. However, in general the degree sequence of the inhomogeneous random graph does not equal the targeted degree sequence. In particular, the inhomogeneous random graph typically contains a linear number of isolated vertices. 
	
One version of the rank-1 inhomogeneous random graph is the generalized random graph~\cite{britton2006}, where $p(w_i,w_j)=w_iw_j/(w_iw_j+\sum_sw_s)$. While this model has significantly different properties than the uniform random graph, as described above, we show that the number of triangles in the generalized random graph and the uniform random graph behaves similarly.
	
	In this paper, we focus on counting triangles in sparse uniform random graphs when $\tau\in(2,3)$. When the maximum degree is bounded, or grows slowly, the number of triangles as well as other short cycles in uniform random graphs are asymptotically Poisson distributed~\cite{bollobas1980,wormald1981,mckay2004}. However, when the degree exponent satisfies $\tau\in(2,3)$, the maximum degree grows as fast as $n^{1/(\tau-1)}$, so that the Poisson limit for the number of triangles no longer holds.  
	We count the number of triangles in two steps. First we show that the main contribution to the number of triangles is from vertices with degrees proportional to $\sqrt{n}$. Thus, even though the maximal degree may be much higher than $\sqrt{n}$, vertices with these high degrees are so rare that they can be neglected when counting triangles. From there, we can use a switching method to count the number of triangles between vertices of degrees proportional to $\sqrt{n}$, resulting in an asymptotic expression for the number of triangles in a uniform random graph with power-law degree exponent $\tau\in(2,3)$.
	
	We then proceed to count triangles where one vertex is constrained to have degree $k$. This allows us to investigate the local clustering coefficient $c(k)$, the probability that two random neighbors of a randomly chosen vertex of degree $k$ are connected. 
	Again, we show that the contribution to $c(k)$ from vertices with degrees outside a specific range is small, and use a switching argument to count the number of constrained triangles from vertices with degrees inside the specified ranges. We show that the $k\mapsto c(k)$ curve consists of three regimes. First, the curve remains flat, then it starts to decay logarithmically in $k$, and finally it decays as a power of $k$. This decay of $c(k)$ as a power of $k$ was also observed in several real-world networks~\cite{vazquez2002,maslov2004,krioukov2012}. 
	
	\paragraph{Notation.}
	We use $\plim $ for convergence in probability. We write $[n]=\{1,\dots,n\}$. We say that a sequence of events $(\mathcal{E}_n)_{n\geq 1}$ happens with high probability (w.h.p.) if $\lim_{n\to\infty}\Prob{\mathcal{E}_n}=1$. Furthermore, we write $f(n)=o(g(n))$ if $\lim_{n\to\infty}f(n)/g(n)=0$, and $f(n)=O(g(n))$ if $f(n)/g(n)$ is uniformly bounded, where $(g(n))_{n\geq 1}$ is nonnegative.
	We say that $X_n=O_{\sss{\prob}}(g(n))$ for a sequence of random variables $(X_n)_{n\geq 1}$ if for any $\varepsilon>0$ there exists $M_\varepsilon>0$ such that $\Prob{|X_n|/g(n)>M_\varepsilon}<\varepsilon$, and $X_n=o_{\sss{\prob}}(g(n))$ if $X_n/g(n)\plim 0$.
	
	\subsection{Uniform random graphs} Given a positive integer $n$ and a \emph{degree sequence}, i.e., a sequence of $n$ positive integers $\boldsymbol{d}=(d_1,d_2,\ldots, d_n)$, where $\sum_{i=1}^n d_i\equiv 0\pmod 2$, the \emph{uniform random graph} is a simple graph, uniformly sampled from the set of all simple graphs with degree sequence $(d_i)_{i\in[n]}$. Here we always assume that $\boldsymbol{d}$ is a realizable degree sequence, meaning that there exists a simple graph with degree sequence $\boldsymbol{d}$. Let $\mathcal{G}(\boldsymbol{d})$ denote the ensemble of all simple graphs on degree sequence $\boldsymbol{d}$, and let $d_{\max}=\max_{i\in[n]}d_i$ and $L_n=\sum_{i=1}^n d_i$. We denote the empirical degree distribution by
	\begin{equation}
	F_n(j)=\frac{1}{n}\sum_{i\in[n]}\ind{d_i\leq j}.
	\end{equation}
	
	We study the setting where the variance of $\boldsymbol{d}$ diverges when $n$ grows large.	
	In particular, we assume that the degree sequence satisfies the following assumption:
	\begin{assumption}[Degree sequence]\label{ass:degrees}
		\leavevmode
				\begin{enumerate}
								\item \label{ass:degreeall}
					There exist $\tau\in(2,3)$ and a constant $K>0$ such that for every $n\ge 1$ and every $0\le j\le d_{\max}$, 
					\begin{equation}\label{eq:bound}
					1-F_n(j)\leq K j^{1-\tau}.
					\end{equation}
			\item \label{ass:degreerange}
			There exist $\tau\in(2,3)$ and a constant $C>0$ such that, for all $j=O(\sqrt{n})$,
			\begin{equation}\label{D-tail}
				1-F_n(j) = Cj^{1-\tau}(1+o(1)).
			\end{equation}
		\end{enumerate}
	\end{assumption}
It follows from~(\ref{eq:bound}) that
\begin{equation}\label{dmax}
d_{\max} < M n^{1/(\tau-1)},\quad \mbox{for some sufficiently large constant $M>0$.}
\end{equation}

Assumption~\ref{ass:degrees}\ref{ass:degreerange} is more detailed than Assumption~\ref{ass:degrees}\ref{ass:degreeall}. Assumption~\ref{ass:degrees}\ref{ass:degreeall} states that for all $j$ the inverse cumulative distribution function is bounded from above by some power law. Assumption~\ref{ass:degrees}\ref{ass:degreerange} then states that a pure power-law degree distribution holds for a smaller range of degrees. If we denote by $D_n$ a uniformly chosen degree in $\boldsymbol{d}$, then Assumption~\ref{ass:degrees}\ref{ass:degreerange} implies that $D_n\dlim D$, where 
\begin{center}
$D$ is the random variable with inverse cumulative distribution function $1-F(j)=Cj^{1-\tau}$. 
\end{center}
Note that $\Var{D}=\infty$ if $\tau\in(2,3)$. Moreover, it is easy to see that Assumption~\ref{ass:degrees}(i) and (ii) imply $\Exp{D_n}\to \Exp{D}$ and thus
\[
L_n=(1+\op(1))\mu n,\quad \mbox{where $\mu=\Exp{D}$.}
\]

To prove our results on the number of triangles, Assumption~\ref{ass:degrees} is sufficient. To investigate the local clustering coefficient $c(k)$ over the entire range of $k$, a more detailed assumption on the degree sequence is necessary:
	\begin{assumption}[Degree sequence, stronger assumption]\label{ass:degreesck}
	\leavevmode
		Assumption~\ref{ass:degrees}\ref{ass:degreeall} holds, and furthermore
		\begin{enumerate}[label=\textup{(ii)'}]
		\item \label{ass:degreerangeck}
		There exist $\tau\in(2,3)$ and constants $C,c>0$ such that for all $j\leq cn^{1/(\tau-1)}/\log(n)$,
		\begin{equation}\label{D-tailck}
		1-F_n(j)=Cj^{1-\tau}(1+o(1)).
		\end{equation}
	\end{enumerate}
\end{assumption}
Note that an i.i.d.\ sample from a power-law distribution with exponent $\tau$ satisfies Assumption~\ref{ass:degrees}\ref{ass:degreeall} and Assumption~\ref{ass:degreesck}\ref{ass:degreerangeck} with high probability.



\subsection{Outline}
We first describe our main results on triangle counts as well as the local clustering coefficient in Section~\ref{sec:results}. We then use a switching argument in Section~\ref{sec:conprob} to obtain the connection probabilities between two vertices conditionally on a finite number of edges being present. We  prove our result for the number of triangles in Section~\ref{sec:prooftriang} and for $c(k)$ in Section~\ref{sec:proofck}. Finally, we provide a conclusion in Section~\ref{sec:conc}.

\section{Main results}\label{sec:results}
We now describe the results for the number of triangles as well as the local clustering coefficient in graphs sampled uniformly from $G(\boldsymbol{d})$.
	\subsection{Number of triangles}
	Let $T(G)$ denote the number of triangles in graph $G$. 
	Then, the following result holds for $T(G)$: 
	\begin{theorem}[Number of triangles]\label{thm:triang}
			Let $\tau\in (2,3)$ and $\boldsymbol{d}_n$ be a degree sequence on $n$ vertices satisfying Assumption~\ref{ass:degrees}. Let $G_n$ be a random graph in $\mathcal{G}(\boldsymbol{ d}_n)$, $\mu=\Exp{D}$ and $C$ be the constant in~\eqref{D-tail}. Then, 
			\begin{equation}
			\frac{T(G_n)}{n^{\frac{3}{2}(3-\tau)}}\plim \frac{1}{6}(C(\tau-1))^3\mu^{-\frac{3}{2}(\tau-1)}\!\!\int_{0}^{\infty}\int_{0}^{\infty}\int_{0}^{\infty}\!\!\frac{(xyz)^{2-\tau}}{(1+xy)(1+yz)(1+xz)}\dd x \dd y \dd z<\infty .
			\end{equation}
	\end{theorem}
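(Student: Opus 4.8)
The plan is to prove the convergence via a first-and-second-moment argument carried out on a restricted range of degrees, using the switching estimates for connection probabilities (the content of Section~\ref{sec:conprob}) as the key input. Write $T(G_n) = \frac{1}{6}\sum_{i,j,k \text{ distinct}} \indic{ij, jk, ik \in G_n}$. The first step is a \emph{truncation}: fix $0 < \vep < A < \infty$ and let $T^{\sss(\vep,A)}(G_n)$ count only ordered triples $(i,j,k)$ with all three degrees in the window $[\vep\sqrt{n}, A\sqrt{n}]$. I would show that (a) the expected number of triangles with at least one vertex of degree below $\vep\sqrt{n}$ is $o(n^{\frac{3}{2}(3-\tau)})$ as $\vep\downarrow 0$ uniformly in $n$, and (b) the expected number with at least one vertex of degree above $A\sqrt{n}$ is likewise negligible as $A\uparrow\infty$. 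Part (b) is where Assumption~\ref{ass:degrees}\ref{ass:degreeall} (the global bound $1-F_n(j)\le Kj^{1-\tau}$) is used: even though $d_{\max}$ can be as large as $n^{1/(\tau-1)}$, the number of vertices of degree $\ge A\sqrt n$ is $O(n (A\sqrt n)^{1-\tau})$, and combined with the edge-probability bounds this kills the contribution of high-degree vertices. Part (a) uses that a vertex of small degree $d_i$ has only $\binom{d_i}{2}$ pairs of neighbors to close, so its contribution to triangles is small.

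The second step is the main-term computation on the window $[\vep\sqrt n, A\sqrt n]$. Using the switching result from Section~\ref{sec:conprob}, the probability that three prescribed vertices $i,j,k$ with degrees $d_i,d_j,d_k$ in this range form a triangle factorizes asymptotically as
\begin{equation}
\CProb{ij,jk,ik \in G_n}{\,} = (1+o(1))\,\frac{d_id_j}{L_n + d_id_j}\cdot\frac{d_jd_k}{L_n+d_jd_k}\cdot\frac{d_id_k}{L_n+d_id_k},
\end{equation}
i.e.\ the edges behave asymptotically like independent generalized-random-graph edges with weights equal to the degrees. Summing over triples and using $L_n = (1+\op(1))\mu n$, I would convert the triple sum into a Riemann sum: writing $d_i = x_i\sqrt n$, the empirical distribution of the rescaled degrees in the window, weighted appropriately, is governed by Assumption~\ref{ass:degrees}\ref{ass:degreerange}, which gives a density proportional to $x^{-\tau}\,\dd x$ on $[\vep, A]$ with the stated constant $C(\tau-1)$ after differentiating $1-F(x\sqrt n) = C(x\sqrt n)^{1-\tau}$. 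The factor $d_id_j/(L_n+d_id_j) = x_ix_j n/(\mu n + x_ix_j n)(1+o(1)) = x_ix_j/(\mu + x_ix_j)$, and substituting $x \mapsto \sqrt\mu\, x$ to absorb $\mu$ produces the kernel $1/(1+xy)$ together with the prefactor $\mu^{-\frac32(\tau-1)}$. Collecting the three $x^{2-\tau}\,\dd x$ factors (each of the three $x^{-\tau}$ densities multiplied by the $x$ from one degree in each of two edge factors... more precisely, bookkeeping the powers carefully) yields the triple integral in the statement; one then lets $\vep\downarrow 0$, $A\uparrow\infty$ and checks the integral converges, which holds because near $0$ the integrand behaves like $(xyz)^{2-\tau}$ with $2-\tau > -1$, and near $\infty$ each denominator supplies enough decay.

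The third step is \emph{concentration}: one must show $T^{\sss(\vep,A)}(G_n)/n^{\frac32(3-\tau)}$ concentrates around its mean so that convergence in probability follows. I would bound $\Var(T^{\sss(\vep,A)}(G_n))$ by splitting pairs of triples according to how many vertices they share ($0$, $1$, or $2$; sharing all $3$ is the diagonal). Triples sharing no vertex contribute, via the same switching estimates extended to six vertices and six prescribed edges, a covariance that is $o$ of the square of the mean (the degrees are in a bounded window, so correlations induced by the fixed edge count are lower order); triples sharing one or two vertices are too few — there are $O(n^5)$ resp.\ $O(n^4)$ such pairs rather than $O(n^6)$, and each shares a factor $n^{-(3-\tau)}$ or $n^{-2(3-\tau)}$ of probability mass less, so after dividing by $n^{3(3-\tau)}$ these are negligible. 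Then Chebyshev gives concentration of $T^{\sss(\vep,A)}(G_n)$, and combined with the truncation bounds of Step~1 (applied in probability via Markov) one upgrades to $T(G_n)/n^{\frac32(3-\tau)}\plim$ the stated constant.

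The main obstacle I expect is controlling the \emph{error terms in the switching estimates uniformly over the window} and then showing they are summable: the connection-probability asymptotics from Section~\ref{sec:conprob} presumably come with multiplicative $(1+o(1))$ factors whose rate may depend on the degrees, and one needs these errors to remain $o(1)$ after summing over $\Theta(n^3)$ triples and to survive the passage $\vep\downarrow 0$, $A\uparrow\infty$. Making the two limits (the $n\to\infty$ limit and the window limits) interchange cleanly — i.e.\ establishing the uniform-in-$n$ tail bounds of Step~1 with the right dependence on $\vep$ and $A$ — is the delicate part; the Riemann-sum convergence and the variance bound are comparatively routine once the switching input is in hand.
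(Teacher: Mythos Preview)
Your proposal is correct and follows essentially the same approach as the paper: truncate to the window around $\sqrt{n}$ (the paper uses $[\varepsilon\sqrt{\mu n},\sqrt{\mu n}/\varepsilon]$, a single-parameter version of your $[\vep\sqrt n,A\sqrt n]$), use the switching estimate from Lemma~\ref{lem:edgeProb} to compute the expected restricted triangle count as a Riemann sum converging to the stated triple integral, bound the variance by splitting over the overlap of pairs of triples, and combine via Markov and Chebyshev before sending $\varepsilon\downarrow 0$. Your anticipated obstacle about uniformity of the $o(1)$ in the switching estimate is in fact not an issue, since Lemma~\ref{lem:edgeProb} gives a uniform error depending only on $|U|=O(1)$; and for the variance with fewer than six distinct vertices the paper uses an even cruder bound than you sketch (simply bounding the indicator by~$1$ and counting that there are at most $O(n^{\frac{5}{2}(3-\tau)})$ five-tuples in the window), which already suffices.
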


\emph{Comparison with the erased configuration model.}
The result on the number of triangles is very similar to the number of triangles in the erased configuration model~\cite{hofstad2017}, where all multiple edges of the configuration model are merged and all self-loops are removed. In the erased configuration model instead,
	\begin{align}
	& \frac{T(G_n)}{n^{\frac{3}{2}(3-\tau)}}\plim\nonumber\\
	&  \frac{1}{6}(C(\tau-1))^3\mu^{-\frac{3}{2}(\tau-1)}\! \int_{0}^{\infty}\! \int_{0}^{\infty}\! \int_{0}^{\infty}\! (xyz)^{-\tau}(1-\me^{-xy})(1-\me^{-yz})(1-\me^{-xz})\dd x \dd y \dd z<\infty .
	\end{align}
	Note that $1-\me^{-x}\geq x/(1+x)$ for all $x>0$. 
	Interestingly, this implies that the erased configuration model contains \emph{more} triangles than the uniform random graph with the same degree sequence, even though edges are removed in the erased configuration model, due to the presence of multiple edges and loops in the configuration model, which was empirically observed in~\cite{bannink2018}. 
This is because the large-degree vertices in the uniform random graph model have more low-degree neighbours than in the erased configuration model. These low-degree vertices barely participate in triangles. 
	\\~\\
	\emph{Similarity to generalized random graphs.}
	In generalized random graphs~\cite{britton2006}, every vertex $i$ is equipped with a weight $w_i$, where the weight sequence is an i.i.d.\ sample of~\eqref{D-tail}. A pair of vertices $i$ and $j$ is then connected with probability 
	\begin{equation}
	\Prob{i\sim j\mid (w_s)_{s\in[n]}}=\frac{w_iw_j}{\sum_{s\in[n]}w_s+w_iw_j},
	\end{equation}
	independently for all pairs of vertices.
The probability that a triangle between vertices $i,j$ and $k$ is present can then be written as
		\begin{equation}
	\Prob{\triangle_{i,j,k}\mid (w_s)_{s\in[n]}}=\frac{w_iw_j}{\sum_{s\in[n]}w_s+w_iw_j}\frac{w_iw_k}{\sum_{s\in[n]}w_s+w_iw_j}\frac{w_jw_k}{\sum_{s\in[n]}w_s+w_iw_j}.
	\end{equation}
	Conditionally on the degree sequence of the generalized random graph, the resulting graph is a uniform random graph on that degree sequence. Thus, to prove that Theorem~\ref{thm:triang} also holds for generalized random graphs, one only needs to show that the degree sequence obtained by the generalized random graph satisfies Assumption~\ref{ass:degrees} with high probability, which is shown in~\cite[Chapter 7]{hofstad2009}.
	\\~\\
	\emph{$\sqrt{n}$ degrees.}
	In the proof of Theorem~\ref{thm:triang}, we show that the main contribution to the number of triangles is from vertices of degrees proportional to $\sqrt{n}$. By a switching argument, we show that the probability of a triangle being present between vertices of degrees proportional to $\sqrt{n}$ is asymptotically bounded away from 0 and 1. We show that the probability that a triangle is present between vertices of degrees much lower than $\sqrt{n}$ tends to zero in the large-network limit, which intuitively explains why vertices of degree scaling smaller than $\sqrt{n}$ have a low contribution to the number of triangles. Because of the power-law distribution, vertices of degree much higher than $\sqrt{n}$ are more rare, and therefore do not contribute much to triangle counts. The main contribution of $\sqrt{n}$ vertices also explains why we need the pure power-law degree distribution of Assumption~\ref{ass:degrees}\ref{ass:degreerange} to hold only for vertices of degrees at most proportional to $\sqrt{n}$. Indeed, we show that vertices with higher degrees barely contribute to the triangle count using only the power-law upper bound of Assumption~\ref{ass:degrees}\ref{ass:degreeall}, so that the pure power-law assumption is not necessary for high-degree vertices.
	\\~\\
	\emph{Counting other subgraphs.}
	While our results are for triangle counts, our method easily extends to counting several other types of subgraphs. To prove our theorem on the number of triangles, we mainly use that the main contribution to the number of triangles is from vertices of degrees proportional to $\sqrt{n}$. In~\cite{hofstad2017d}, it was shown that in the erased configuration model, the class of subgraphs where the main contribution is from vertices of degrees proportional to $\sqrt{n}$ is wider, containing for example also all complete graphs of larger sizes. It is easy to show that this class of subgraphs is the same for uniform random graphs, enabling to analyze these subgraph counts in a very similar manner as triangle counts. We do not elaborate on such results and refer to~\cite[Theorem 2.1]{hofstad2017d} which apply here too with the change that $1-\me^{-x}$ should be replaced with $ x/(1+x)$ in all results.
	
		\subsection{Local clustering coefficient}
		We now investigate the triangle structure in uniform random graphs in more detail. 
	Let $\triangle_k$ denote the number of triangles attached to vertices of degree $k$ in the uniform random graph, and when a triangle contains three degree-$k$ vertices it is counted three times. When a triangle consists of two vertices of degree $k$, it is counted twice in $\triangle_k$.  Let $N_k$ denote the number of vertices of degree $k$.
	Then, the local clustering coefficient of vertices with degree $k$ equals
	\begin{equation}
	\label{c(k)-def}
	c(k)=
	\frac{1}{N_k}\frac{2\triangle_k}{k(k-1)},
	\end{equation}
	for all $k$ with $N_k\geq 1$. Note that $c(k)$ is not defined if $N_k=0$.
	The local clustering coefficient can be interpreted as the probability that two randomly chosen neighbors of a vertex of degree $k$ connect to one another. Typically, $c(k)$ is a decreasing function of $k$.
	
	
	The next theorem shows the behavior of $c(k)$ in uniform random graphs:
	\begin{theorem}[Local clustering.]\label{thm:ck}
		Let $\tau\in (2,3)$ and $\boldsymbol{d}_n$ be a degree sequence on $n$ vertices satisfying Assumption~\ref{ass:degreesck}. Let $G_n$ be a uniformly sampled graph from $\mathcal{G}(\boldsymbol{ d}_n)$. Define $A=\pi/\sin(\pi\tau)>0$ for $\tau\in(2,3)$, $\mu=\Exp{D}$ and let $C$ be the constant in~\eqref{D-tailck} and $c(k)$ the local clustering coefficient of $G_n$. Then, as $n\to\infty$,
		\begin{enumerate}[leftmargin=* , labelsep=1.5cm, 
			align=LeftAlignWithIndent, itemsep=-0.1cm]
			\item [\textup{(Range I.)}]  for $1\ll k=o( n^{(\tau-2)/(\tau-1)})$ where $N_k\ge 1$, 
			\begin{equation}
			\frac{c(k)}{n^{2-\tau}\log(n)}\plim \frac{3-\tau}{\tau-1} \mu^{-\tau} (C(\tau-1))^2A,
			\end{equation}
			\item
			[\textup{(Range II.)}] for $k=\omega(n^{(\tau-2)/(\tau-1)})$ and $k=o( \sqrt{n})$ where $N_k\ge 1$,
			\begin{equation}
			\frac{c(k)}{n^{2-\tau}\log(n/k^2)}\plim \mu^{-\tau} (C(\tau-1))^2 A ,
			\end{equation}
			\item[\textup{(Range III.)}] for $k=\lceil B\sqrt{n}\rceil$ where $N_k\geq 1$ and $B>0$ is a constant,
			\begin{equation}
			\frac{c(k)}{n^{2-\tau}}\plim \mu^{2-2\tau}(C(\tau-1))^2\!\!\int_{0}^{\infty}\int_{0}^{\infty}\!\!\frac{(t_1t_2)^{2-\tau}}{(1+t_1B)(1+t_2B)(\mu^{-1}+t_1t_2)}\dd t_1\dd t_2
			\end{equation}
			\item 
			[\textup{(Range IV.)}] for $k=\omega(\sqrt{n})$ and $k\leq d_{\max}$ where $N_k\ge 1$,
			\begin{equation}
			\frac{c(k)}{n^{5-2\tau}k^{2\tau-6}}\plim \mu^{3-2\tau}(C(\tau-1))^2 A^2.
			\end{equation}
		\end{enumerate} 
	\end{theorem}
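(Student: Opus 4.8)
All four ranges follow one scheme: a precise first moment for $\triangle_k$, then a variance bound. Write $v$ for a vertex of degree $k$ and $X_v$ for the number of triangles through it, so $\triangle_k=\sum_{v:\,d_v=k}X_v$; since $N_k$ is deterministic, $c(k)=\tfrac{2}{k(k-1)}\,\tfrac{\triangle_k}{N_k}$, so it suffices to establish the deterministic asymptotics of $\Exp{X_v}$ and that $\triangle_k=(1+\op(1))\Exp{\triangle_k}$. For the first moment, write $\Exp{X_v}=\tfrac12\sum_{j\ne\ell}\Prob{v\sim j}\,\Prob{v\sim\ell\mid v\sim j}\,\Prob{j\sim\ell\mid v\sim j,\,v\sim\ell}$ and invoke the switching estimates of Section~\ref{sec:conprob}: each conditional edge probability equals $(1+o(1))\,d_ad_b/(L_n+d_ad_b)$, uniformly over the relevant pairs — conditioning freezes only boundedly many stubs, negligible for large degrees, while pairs with a bounded-degree endpoint are shown to contribute a vanishing fraction of the sum and are discarded. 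With $L_n=(1+o(1))\mu n$ this gives
\[
\Exp{X_v}=(1+o(1))\;\tfrac12\sum_{j\ne\ell}\frac{kd_j}{\mu n+kd_j}\,\frac{kd_\ell}{\mu n+kd_\ell}\,\frac{d_jd_\ell}{\mu n+d_jd_\ell}.
\]

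Next I would replace the double sum by $n^2(C(\tau-1))^2\!\int\!\!\int\phi(x,y)(xy)^{-\tau}\dd x\,\dd y$ over $[1,d_{\max}]^2$, with $\phi$ the product above; this is exactly where Assumption~\ref{ass:degreesck}\ref{ass:degreerangeck} — the pure power law up to scale $\asymp n^{1/(\tau-1)}/\log n$ — is needed, since in Ranges~I--II the dominant degrees run up to a power of $n$, the extreme top being absorbed by the upper bound~\eqref{eq:bound}. The rest is analytic and splits according to which region of the integral dominates. In Range~III, $x=t_1\sqrt n$, $y=t_2\sqrt n$ makes the integrand converge pointwise with an integrable envelope, and a further rescaling by $\mu$ produces the stated two-dimensional integral. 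In Range~IV the three saturating factors concentrate the mass at $x,y\asymp\mu n/k=o(\sqrt n)$; rescaling $x=(\mu n/k)s_1$, $y=(\mu n/k)s_2$ turns the first two factors into $s_i/(1+s_i)$ and linearizes the third, producing $\big(\int_0^\infty s^{2-\tau}/(1+s)\,\dd s\big)^2=A^2$ via the reflection formula. In Ranges~I--II the factors $kd/(\mu n+kd)$ are effectively $kd/(\mu n)$ throughout the logarithmically wide window that dominates — the sub-window where they saturate is only a bounded multiplicative slice, hence negligible on the $\log$ scale — so $\phi$ collapses to $\tfrac{k^2}{\mu^2n^2}\,(xy)^{2-\tau}/(\mu n+xy)$; passing to $p=xy$, $q=x/y$, the $p$-integral concentrates at $p\asymp\mu n$ and contributes $A=\int_0^\infty u^{2-\tau}/(1+u)\,\dd u$, while the $q$-integral contributes $\log\!\big((X^*)^2/(\mu n)\big)$ with $X^*=\min(d_{\max},\mu n/k)$ the effective degree cutoff. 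For $k=\omega(n^{(\tau-2)/(\tau-1)})$ this cutoff is $\mu n/k$, giving $\log(n/k^2)$ (Range~II); for $k=o(n^{(\tau-2)/(\tau-1)})$ it is $\asymp n^{1/(\tau-1)}$, giving $\tfrac{3-\tau}{\tau-1}\log n$ (Range~I); the two expressions agree at the common boundary. Dividing by $k(k-1)$ delivers the four constants.

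For concentration I would bound $\Var{\triangle_k}$ by a second-moment computation, writing $\Exp{\triangle_k^2}-\Exp{\triangle_k}^2$ as a sum over pairs of anchored triangles grouped by how they overlap. The disjoint part reproduces $\Exp{\triangle_k}^2$ to leading order; the diagonal contributes $\Exp{\triangle_k}$, which is $o(\Exp{\triangle_k}^2)$ once $\Exp{\triangle_k}\to\infty$ — a fact requiring a short verification (again using Assumption~\ref{ass:degreesck}\ref{ass:degreerangeck}) and delicate when $N_k$ is small, i.e.\ for $k$ near a range boundary; the remaining terms, where the two triangles share a vertex or an edge, reduce to counts of bowties, of $K_4$ minus an edge, and the like, which are estimated by the same connection probabilities and shown to be $o(\Exp{\triangle_k}^2)$. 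Chebyshev's inequality then completes the proof. I expect the real obstacles to be (i) upgrading Section~\ref{sec:conprob} to \emph{uniform} $(1+o(1))$ estimates across the $\Theta(n^2)$ degree pairs, especially at the borderline scales $d_j\asymp\sqrt n$ (Range~III), $d_j\asymp\mu n/k$ (Range~IV) and $d_jd_\ell\asymp\mu n$ (Ranges~I--II) where the switching bounds are tightest, together with the interchange of sum and integral; and (ii) the variance control when $N_k$ is bounded, where $\triangle_k$ is essentially a single Poisson-type count and one must certify that $\Exp{\triangle_k}$ diverges on the whole stated range in which $N_k\ge1$.
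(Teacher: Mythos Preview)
Your proposal is correct and follows the same two-step scheme as the paper: compute $\Exp{\triangle_k}$ via the switching estimate of Lemma~\ref{lem:edgeProb}, then concentrate by a second-moment bound over pairs of anchored triangles grouped by overlap. The one structural difference is that the paper, rather than attacking the full double sum and extracting asymptotics analytically as you propose, first introduces an explicit truncation set $W_n^k(\varepsilon)$ of degree pairs (different in each range, see~\eqref{eq:Wnk}), proves convergence of the expectation and concentration \emph{restricted to} $W_n^k(\varepsilon)$ --- where all relevant degrees are large and the $(1+o(1))$ in Lemma~\ref{lem:edgeProb} is automatically uniform --- separately bounds the complement $\bar W_n^k(\varepsilon)$ by $h(\varepsilon)\,f(n,k)$ using only the upper bound~\eqref{eq:bound}, and finally sends $\varepsilon\downarrow0$. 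This device dispatches exactly the two obstacles you flag in~(i): uniformity and the sum-to-integral exchange become trivial on a compact window, at the cost of a standard $\varepsilon$--$\delta$ layer. Your direct approach is viable but would require the tail-splitting and dominated-convergence arguments you allude to. For the analytic evaluation in each range the paper does not re-derive the integrals via your $p=xy$, $q=x/y$ substitution; it instead invokes the corresponding lemmas from~\cite{hofstad2017c} for the erased configuration model, noting that the only change is replacing $1-\e^{-xy/L_n}$ by $xy/(L_n+xy)$ throughout. Your concern~(ii) about small $N_k$ is not addressed separately in the paper; the variance bound (Lemma~\ref{lem:varck}) goes through uniformly because $\Exp{c(W_n^k(\varepsilon))}\asymp f(n,k)$ regardless of $N_k$, and one checks directly that $N_k k^2 f(n,k)\to\infty$ on each range.
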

	Figure~\ref{fig:ck} illustrates the behavior of $c(k)$ in the uniform random graph. For small values of $k$, $c(k)$ is independent of $k$. Then, a range of slow decay in $k$ follows. When $k\gg\sqrt{n}$, $c(k)$ starts to decay as a power of $k$. A Taylor expansion of the behavior of $c(k)$ in the third range for $B$ small and $B$ large shows that the behavior of the scaling limit of $k\mapsto c(k)$ between Ranges II and IV is smooth (see also~\cite[Theorem~3]{hofstad2017c})
	\begin{figure}
		\centering
		\begin{minipage}{0.45\linewidth}
		\includegraphics[width=\textwidth]{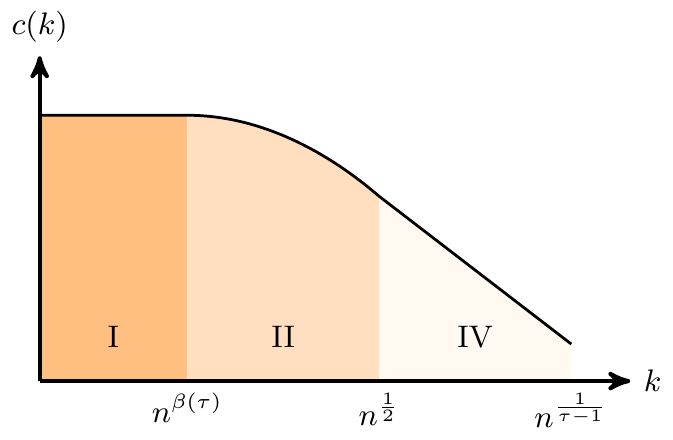}
		\caption{Illustration of the $k\mapsto c(k)$ curve of Theorem~\ref{thm:ck}.}
		\label{fig:ck}
		\end{minipage}
	\hspace{0.2cm}
	\begin{minipage}{0.45\linewidth}
		\includegraphics[width=\textwidth]{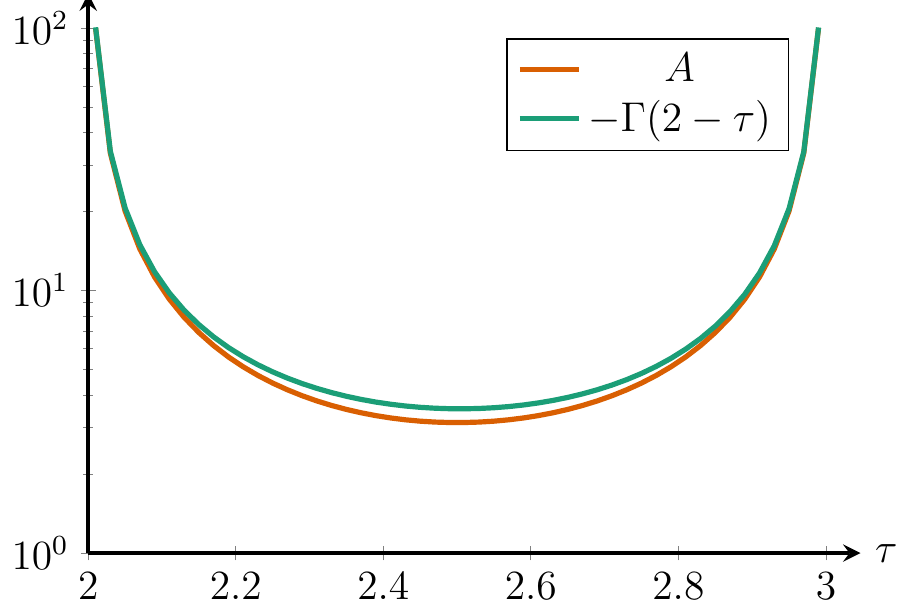}
		\caption{The constant $A=\pi/\sin(\pi \tau)$ of Theorem~\ref{thm:ck} and $-\Gamma(2-\tau)$.}
		\label{fig:constcomp}
	\end{minipage}
	\end{figure}
	
	\emph{Main contributions.}
	As in the case of triangle counts, we show that the constrained triangle counts are dominated by triangle counts between vertices of degrees in specific ranges. Because the triangles are constrained to contain one vertex of degree $k$, these ranges describe the degrees of the two other vertices involved in the triangle. When $k$ is in Range I, the largest contribution to the number of constrained triangles is from vertices $i$ and $j$ such that $d_id_j$ scales as $n$. Note that the most contributing vertices here do not depend on $k$, explaining the independence of $c(k)$ of $k$ in the initial range. When $k$ is in Range II, the largest contribution to the number of constrained triangles is from vertices $i$ and $j$ such that $d_id_j$ scales as $n$ and additionally $d_i,d_j\leq n/k$. This extra constraint causes the mild dependence on $k$ in the second range. In the fourth range, the vertices that contribute most to $c(k)$ satisfy that $d_i$ and $d_j$ scale as $n/k$. In this last regime, the degrees of the other two vertices involved in the triangle clearly depend on $k$. 
	\\~\\
	\emph{Comparison with the erased configuration model.}
	The result for $c(k)$ is also very similar in the erased configuration model and the uniform random graph. In fact, a similar theorem as Theorem~\ref{thm:ck} holds for the erased configuration model, when the constant $A$ is replaced by $-\Gamma(2-\tau)$, where $\Gamma$ denotes the Gamma function. Note that $-\Gamma(2-\tau)>\pi/(\sin(\pi\tau))=A$ for $\tau\in(2,3)$, as Figure~\ref{fig:constcomp} shows. Thus, the local clustering coefficient of the erased configuration model is higher than the local clustering coefficient of a uniform random graph of the same degree sequence, similarly to triangle counts.
	\\~\\
	\emph{Assumption on the degree sequence.}
	To prove Theorem~\ref{thm:triang}, we assume a pure power-law degree distribution for vertices with degrees at most $\sqrt{n}$ (Assumption~\ref{ass:degrees}), since these vertices form the main contribution to the number of triangles. For Theorem~\ref{thm:ck}, we assume a pure power-law degree distribution all the way up to $n^{1/(\tau-1)}/\log(n)$. However, this strong assumption is only necessary in Range I, because there the most contributing triangles containing a vertex of degree $k$  may have degrees as high as $n^{1/(\tau-1)}/\log(n)$. In Ranges II, III and IV, the main contributing triangles with a vertex of degree $k$ have degrees at most $n/k$, so that it suffices to assume the pure power law~\eqref{D-tailck} only for $j=O(n/k)$. However, for ease of notation we use Assumption~\ref{ass:degreesck}\ref{ass:degreerangeck} throughout the rest of this paper.
		
	\section{Connection probability estimates}\label{sec:conprob}
	In this section, we estimate the connection probability between vertices of specific degrees in a uniform random graph, which is the key ingredient for proving Theorems~\ref{thm:triang} and~\ref{thm:ck}.
Recall that $L_n=\sum_{i=1}^nd_i$. 
Furthermore, let $\{u\sim v\}$ denote the event that vertex $u$ is connected to $v$.

Our key lemma is as follows, where the probability space refers to the uniformly random simple graphs with degree sequence $\boldsymbol{d}$:

\begin{lemma}\label{lem:edgeProb}
	Assume that $\tau\in(2,3)$ is fixed and
	$\boldsymbol{d}$ satisfies Assumption~\ref{ass:degrees}\ref{ass:degreeall} with $\tau\in(2,3)$. Assume further that $L_n/n \nrightarrow 0$. Let $U$ denote a set of unordered pairs of vertices and let $E_U$ denote the event that $\{x,y\}$ is an edge for every $\{x,y\}\in U$. Then, assuming that $|U|=O(1)$, for every $\{u,v\}\notin U$,
	\begin{equation}\label{eq:pijon}
	\Prob{u\sim v\mid E_U} = (1+o(1)) \frac{(d_u-|U_u|)(d_v-|U_v|)}{L_n+(d_u-|U_u|)(d_v-|U_v|)},
	\end{equation}
	where $U_x$ denote the set of pairs in $U$ that contain $x$.
\end{lemma}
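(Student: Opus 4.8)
The plan is to estimate the conditional edge probability by comparing two sets of simple graphs in $\mathcal{G}(\boldsymbol{d})$: those in which $E_U$ holds and $u\sim v$, versus those in which $E_U$ holds but $u\not\sim v$. Writing $\Prob{u\sim v\mid E_U}$ as $N^+/(N^+ + N^-)$, where $N^{\pm}$ are these two counts, it suffices to show $N^+/N^- = (1+o(1))(d_u-|U_u|)(d_v-|U_v|)/L_n$. I would establish this via a switching argument: given a graph $G$ with $E_U$ and $u\sim v$, pick an edge $\{u,a\}$ (other than those forced by $U$, and with $a\neq v$) and an edge $\{v,b\}$ with $b\neq u$, $b\neq a$, $\{a,b\}$ not already an edge, and such that deleting $\{u,v\},\{u,a\},\{v,b\}$ and adding $\{u,a'\}$... — more precisely, the standard ``edge-switch'' that removes $\{u,v\}$ and one other suitably chosen edge and creates two new edges, landing in a graph with $E_U$ and $u\not\sim v$, while preserving all degrees. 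Counting the number of ways to perform the switch forward (roughly $d_u d_v$, the choices of the two half-edges at $u$ and $v$ to redirect) against the number of ways to invert it (roughly $L_n$, the choices of the edge to break up), and checking that the excluded configurations (creating loops, multi-edges, or edges inside $U$) are a negligible fraction, yields the ratio $d_u d_v/L_n$ up to $1+o(1)$.

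The key steps, in order, are: (1) set up the two graph classes and the target ratio; (2) define the forward switching precisely, being careful that the redirected half-edges at $u$ and $v$ are not the ones participating in pairs of $U$ — this is why the degrees get corrected to $d_u-|U_u|$ and $d_v-|U_v|$; (3) show that the number of valid forward switches from a typical $G$ is $(1+o(1))(d_u-|U_u|)(d_v-|U_v|)$ and the number of valid inverse switches into a typical $G'$ is $(1+o(1))L_n$, using Assumption~\ref{ass:degrees}\ref{ass:degreeall} to bound the number of ``bad'' switches (those blocked by an already-present edge or by the $|U|=O(1)$ constraints); (4) invoke a counting lemma (the standard switching inequality, e.g.\ as in~\cite{mckay1981, mckay2011}) to conclude $N^+/N^- = (1+o(1))(d_u-|U_u|)(d_v-|U_v|)/L_n$; (5) rearrange to get~\eqref{eq:pijon}. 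The hypothesis $L_n/n\nrightarrow 0$ guarantees $L_n = \Theta(n)$ along the relevant subsequence, so the denominator is genuinely of order $n$ and the error terms are controlled.

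The main obstacle is step (3): controlling the number of switches that are blocked. When $d_u$ and $d_v$ are as large as $n^{1/(\tau-1)}$, one must ensure that the fraction of the $\approx d_u d_v$ candidate half-edge pairs $(a,b)$ for which $\{a,b\}$ is already an edge, or $a=b$, or $a\in\{v,\text{endpoints in }U_v\}$, etc., is $o(1)$. Bounding the number of pairs $(a,b)$ with $a\sim u$, $b\sim v$ and $a\sim b$ requires an a priori estimate on the number of such ``cherries closing into triangles'' — but crucially only a crude upper bound suffices here, obtainable by a first-moment/switching estimate that uses only~\eqref{eq:bound} and $L_n=\Theta(n)$. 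Similarly, on the inverse side, one must show that a typical edge $\{a,b\}$ chosen to be broken up does not collide with $u$, $v$, or $U$; since $|U|=O(1)$ and $d_u,d_v = O(n^{1/(\tau-1)}) = o(n) = o(L_n)$, these collisions involve only an $o(1)$ fraction of the $L_n$ edges. Assembling these bounds uniformly over the pair $\{u,v\}$ and over $U$ with $|U|=O(1)$ is the technical heart of the argument; everything else is bookkeeping.
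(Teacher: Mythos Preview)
Your high-level plan matches the paper's: set up the two classes $S$ (graphs with $E_U$ and $u\sim v$) and $\bar S$ (graphs with $E_U$ and $u\not\sim v$), express the conditional probability via $|\bar S|/|S|$, and estimate that ratio by a switching. The gap is in the specific switching you choose and in how you propose to control the blocked switches.

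The scheme you describe is a two-edge switch (remove $\{u,v\}$ and one other edge, add two new edges). In whichever direction you orient it, one of the two counts is $(d_u-|U_u|)(d_v-|U_v|)$ up to an error equal to the number of pairs $(a,b)$ with $a\sim u$, $b\sim v$, $a\sim b$ in the graph at hand---i.e., the number of $3$-paths $u{-}a{-}b{-}v$. For the switching identity to give a $(1+o(1))$ factor you need this error to be $o(d_ud_v)$ \emph{uniformly over the relevant graph class}. Deterministically, however, the only available bound is that the number of such $3$-paths is at most the number of $2$-paths from $u$ (or from $v$), which is $o(n)$; when $d_ud_v=o(n)$ this is useless. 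Your proposed fix---a ``first-moment/switching estimate''---is circular (it presupposes the edge-probability estimate you are proving) and would in any case give only an average bound rather than the uniform one the argument needs.

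The paper sidesteps this by using a \emph{three-edge} switching (remove $\{u,v\},\{x,a\},\{y,b\}$; add $\{u,x\},\{v,y\},\{a,b\}$), so that the forward and backward counts are $L_n^2(1+o(1))$ and $(d_u-|U_u|)(d_v-|U_v|)L_n(1+o(1))$ respectively---both carrying an extra factor $L_n$. The bad configurations are then bounded via the purely deterministic Lemma~\ref{lem:2-paths}: under Assumption~\ref{ass:degrees}\ref{ass:degreeall}, every vertex in every graph with degree sequence $\boldsymbol d$ has only $o(n)=o(L_n)$ outgoing $2$-paths. This $o(L_n)$ is negligible against the main terms regardless of the size of $d_ud_v$, which is exactly what the two-edge switch lacks. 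That combinatorial lemma, not any probabilistic estimate, is the missing key ingredient.
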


\begin{remark}\label{rem:edgeprob}
	Lemma~\ref{lem:edgeProb} shows that when $d_ud_v\gg L_n$, then
	\begin{equation}
		1-\Prob{u\sim v}=(1+o(1))\frac{L_n}{d_ud_v}.
	\end{equation}
	In the erased configuration model on the other hand~\cite{hofstad2005},
	\begin{equation}
	1-\Prob{u\sim v}\leq \me^{-d_ud_v/L_n}.
	\end{equation}
	Thus, the probability that two high-degree vertices are not connected  
	decreases at an exponential rate in $L_n/d_ud_v$, whereas this rate is polynomial in the uniform random graph model. Thus, even though the results on clustering are similar in the 
	two models, there are edge probabilities that behave significantly differently.
	\end{remark}

%

We now proceed to prove Lemma~\ref{lem:edgeProb}. 
As a preparation, we first prove a lemma about the number of 2-paths starting from a specified vertex. 
\begin{lemma}\label{lem:2-paths} 
	Assume that
	$\boldsymbol{d}$ satisfies Assumption~\ref{ass:degrees}\ref{ass:degreeall} with fixed exponent $\tau\in(2,3)$. For any graph $G$ whose degree sequence is $\boldsymbol{d}$, 
	the number of 2-paths starting from any specified vertex is $o(n)$. 
\end{lemma}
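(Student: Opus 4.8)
The plan is to bound the number of $2$-paths from a fixed vertex $v$ by a sum over the degrees of its neighbours, and then control that sum using the power-law upper bound~\eqref{eq:bound}. A $2$-path starting from $v$ is a pair of edges $v\sim u$, $u\sim w$ with $w\neq v$; hence the number of such paths is at most $\sum_{u\sim v} d_u$, which in turn is at most $\sum_{u\sim v}(d_u-1)+d_v$. Since the $d_v$ term is negligible (it is $O(n^{1/(\tau-1)})=o(n)$ by~\eqref{dmax}), it suffices to show $\sum_{u\sim v}d_u=o(n)$. The key point is that $v$ has at most $d_v\le d_{\max}<Mn^{1/(\tau-1)}$ neighbours, and we need to bound the sum of their degrees; the worst case is when these neighbours are the $d_v$ vertices of largest degree in the whole graph.

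So the first step is: for any $m\le n$, bound $S(m):=\sum_{i=1}^{m}d_{(i)}$, where $d_{(1)}\ge d_{(2)}\ge\cdots$ is the decreasing rearrangement of $\boldsymbol d$. From~\eqref{eq:bound}, the number of vertices with degree at least $j$ is at most $Kn j^{1-\tau}$, so the $i$-th largest degree satisfies $d_{(i)}\le (Kn/i)^{1/(\tau-1)}$ (up to rounding). Summing, $S(m)\le \sum_{i=1}^m (Kn/i)^{1/(\tau-1)} = (Kn)^{1/(\tau-1)}\sum_{i=1}^m i^{-1/(\tau-1)}$. Since $\tau\in(2,3)$ we have $1/(\tau-1)\in(1/2,1)$, so $\sum_{i=1}^m i^{-1/(\tau-1)} = O\big(m^{1-1/(\tau-1)}\big)=O\big(m^{(\tau-2)/(\tau-1)}\big)$. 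Therefore $S(m) = O\big(n^{1/(\tau-1)}m^{(\tau-2)/(\tau-1)}\big)$.

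The second step is to plug in $m=d_v\le Mn^{1/(\tau-1)}$: this gives
\[
\sum_{u\sim v} d_u \le S(d_v) = O\Big(n^{1/(\tau-1)}\cdot\big(n^{1/(\tau-1)}\big)^{(\tau-2)/(\tau-1)}\Big) = O\big(n^{(\tau-1+\,\tau-2)/(\tau-1)^2}\big).
\]
We need the exponent $\frac{(\tau-1)+(\tau-2)}{(\tau-1)^2}=\frac{2\tau-3}{(\tau-1)^2}$ to be strictly less than $1$, i.e. $2\tau-3<(\tau-1)^2=\tau^2-2\tau+1$, i.e. $0<\tau^2-4\tau+4=(\tau-2)^2$, which holds for all $\tau\neq 2$, in particular for $\tau\in(2,3)$. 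Hence $\sum_{u\sim v}d_u=o(n)$, and adding back the negligible $d_v$ term completes the argument. I would write the bound slightly more carefully to absorb the ceiling in~\eqref{dmax} and the fact that $d_{(i)}$ is an integer, but these are routine.

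The main obstacle is purely bookkeeping: making the rearrangement inequality $d_{(i)}\le (Kn/i)^{1/(\tau-1)}$ rigorous from the cumulative bound~\eqref{eq:bound} (one must be careful that $1-F_n$ is evaluated at integers and handle the boundary), and then tracking the two powers of $n^{1/(\tau-1)}$ through the exponent arithmetic to land on $(\tau-2)^2>0$. There is no probabilistic content here — the statement is deterministic for every graph with degree sequence $\boldsymbol d$ — so no concentration or switching machinery is needed; the only input is Assumption~\ref{ass:degrees}\ref{ass:degreeall} together with~\eqref{dmax}.
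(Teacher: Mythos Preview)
Your proposal is correct and follows essentially the same argument as the paper: bound the $i$-th largest degree by $(Kn/i)^{1/(\tau-1)}$ via Assumption~\ref{ass:degrees}\ref{ass:degreeall}, sum over the $d_{\max}$ largest degrees, and arrive at the exponent $(2\tau-3)/(\tau-1)^2<1$. The paper's proof is terser (it simply asserts the last inequality for $\tau\in(2,3)$, whereas you verify it via $(\tau-2)^2>0$), but the logic is identical.
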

\begin{proof} Without loss of generality we may assume that $d_1 \geq d_2 \geq \dots \geq d_n$. For every $1\le i\le n$, the number of vertices with degree at least $d_i$ is at least $i$. By Assumption~\ref{ass:degrees}\ref{ass:degreeall}, we then have $Knd_i^{1-\tau}\ge i$ for every $1\le i\le n$. It follows then that $d_i \leq  \left(K n/i\right)^{\frac{1}{\tau -1}} $. Then the number of 2-paths from any specified vertex is bounded by $\sum_{i=1}^{d_1} d_i$, which is at most
	\begin{align*}
	\sum_{i\geq 1}^{d_1}\left( \frac{Kn}{i} \right)^{{1}/{(\tau-1)}} = \left(Kn \right)^{{1}/{(\tau-1)}} \sum_{i=1}^{d_1} i^{-{1}/{(\tau - 1)}}=O\left(n^{{1}/{(\tau-1)}}\right) d_1^{\frac{\tau-2}{\tau - 1}}= O\left( n^{\frac{2\tau-3}{(\tau-1)^2}} \right),
	\end{align*}
	since $d_1 \le (Kn)^{{1}/{(\tau-1)}}$. Since $\tau\in(2,3) $ the above is $o(n)$. 
	
\end{proof}

\begin{proof}[Proof of Lemma~\ref{lem:edgeProb}.]
To estimate $\CProb{u\sim v}{E_U}$, we will switch between two classes of graphs $S$ and $\bar{S}$. $S$ consists of graphs where all edges in $\{u,v\}\cup U$ are present, whereas  $\bar{S}$ consists of all graphs where every $\{x,y\}\in U$ represents an edge whereas $\{u,v\}$ is not an edge.  
Note that 
\begin{equation}
\label{eq:pxuvS}
\CProb{u\sim v}{E_U} = \frac{|S|}{|S|+|\bar S|} = \frac{1}{1+{|\bar S|}/{|S|}}.
\end{equation}
In order to estimate the ratio $|\bar S|/|S|$, we will define an operation called a {\em forward switching} which converts a graph in $G\in S$ to a graph $G'\in \bar S$. The reverse operation converting $G'$ to $G$ is called a {\em backward switching}. Then we estimate $|\bar S|/|S|$ by counting the number of forward switchings that can be applied to a graph $G\in S$, and the number of backward switchings that can be applied to a graph $G'\in \bar S$.
\begin{figure}
	\begin{center}
		\begin{tikzpicture}[line width=.5pt,vertex/.style={circle,inner sep=0pt,minimum size=0.2cm}, scale=.6]
		

		\node [black, fill=black,label={[label distance=0mm]0:$y$}] (y) at (0:2)  [vertex]{}; 
		\node [black, fill=red,label={[label distance=0mm]60:$v$}] (v) at (60:2)  [vertex]{}; 
		\node [black, fill=red,label={[label distance=0mm]120:$u$}] (u) at (120:2)  [vertex]{};
		\node [black, fill=black,label={[label distance=0mm]180:$x$}] (x) at (180:2)  [vertex]{}; 
		\node [black, fill=black,label={[label distance=0.5mm]240:$a$}] (a) at (240:2) [vertex]{};  
		\node [black, fill=black,label={[label distance=0mm]300:$b$}] (b) at (300:2)  [vertex]{}; 
		
		
		\draw[thick, red,-] (u) to (v);
		\draw[thick, -] (x) to (a);
		\draw[thick, -] (y) to (b);
		\draw[thick,dashed] (u) to (x);
		\draw[thick, dashed] (b) to (a);
		\draw[thick, dashed] (y) to (v);
		
		\draw[thick, <->] (3,0) to (4,0);

		\end{tikzpicture}
		\begin{tikzpicture}[line width=.5pt,vertex/.style={circle,inner sep=0pt,minimum size=0.2cm}, scale=.6]
		

		\node [black, fill=black,label={[label distance=0mm]0:$y$}] (y) at (0:2)  [vertex]{}; 
		\node [black, fill=red,label={[label distance=0mm]60:$v$}] (v) at (60:2)  [vertex]{}; 
		\node [black, fill=red,label={[label distance=0mm]120:$u$}] (u) at (120:2)  [vertex]{};
		\node [black, fill=black,label={[label distance=0mm]180:$x$}] (x) at (180:2)  [vertex]{}; 
		\node [black, fill=black,label={[label distance=0.5mm]240:$a$}] (a) at (240:2) [vertex]{};  
		\node [black, fill=black,label={[label distance=0mm]300:$b$}] (b) at (300:2)  [vertex]{}; 
		
		
		\draw[thick, red, dashed] (u) to (v);
		\draw[thick,-] (u) to (x);
		\draw[thick, -] (b) to (a);
		\draw[thick, -] (y) to (v);
		\draw[thick, dashed] (x) to (a);
		\draw[thick, dashed] (y) to (b);

		\end{tikzpicture}
		
	\end{center}
	\caption{Forward and backward switchings}
	\label{fig:switching}
\end{figure}
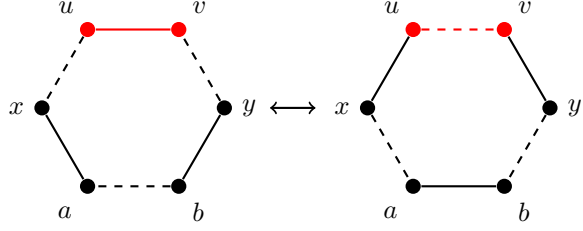

	The forward switching is defined by choosing two edges and specifying their ends as $\{x,a\}$ and $\{y,b\}$. The choice must satisfy the following constraints:
\begin{enumerate}
	\item None of $\{u,x\}$, $\{v,y\}$, or $\{a,b\}$ is an edge;
	\item $\{x,a\},\{y,b\}\notin U$;
	\item All of $u$, $v$, $x$, $y$, $a$, and $b$ must be distinct except that $x=y$ is permitted.
\end{enumerate} 
Given a valid choice, the forward switching replaces  the three edges $\{u,v\}$, $\{x,a\}$, and $\{y,b\}$ by $\{u,x\}$, $\{v,y\}$, and $\{a,b\}$. Note that the forward switching preserves the degree sequence, and converts a graph in $S$ to a graph in $\bar S$. The inverse operation of a forward switching is called a backward switching. See Figure~\ref{fig:switching} for an illustration.

Next, we estimate the number of ways to perform a forward switching to a graph $G$ in $S$, denoted by $f(G)$, and the number of ways to perform a backward switching to a graph $G'$ in $\bar S$, denoted by $b(G)$. The number of total switchings between $S$ and $\bar S$ is equal to $|S| \Exp{f(G)}=|\bar S| \Exp{b(G')}$, where the expectation is over a uniformly random $G\in S$ and $G'\in \bar S$ respectively. Consequently,
\begin{equation}\label{eq:SSprime}
\frac{|\bar S|}{|S|} = \frac{\Exp{ f(G)}}{\Exp{ b(G')}}.
\end{equation}

Given an arbitrary graph $G\in S$, the number of ways of carrying out a forward switching is at most $L_n^2$, since there are at most $L_n$ ways to choose $\{x,a\}$, and at most $L_n$ ways to choose $\{y,b\}$. Note that choosing $\{x,a\}$ for the first edge and $\{y,b\}$ for the second edge results in a different switching than vice versa. To find a lower bound on the number of ways of performing a forward switching, we subtract from $L_n^2$ an upper bound on the number of invalid choices for $\{x,a\}$ and $\{y,b\}$. These can be summarized as follows:
\begin{enumerate}[label=(\alph*)]
	\item At least one of $\{u,x\}, \{a,b\}, \{v,y\}$ is an edge,
	\item At least one of $\{x,a\}$ or $\{y,b\}$ is in $U$,
	\item Any vertex overlap other than $x=y$ (i.e. if one of $a$ or $b$ is equal to one of $x$ or $y$, or if $a=b$, or if one of $u$ or $v$ are one of $\left\{a,b,x,y \right\}$).
\end{enumerate}
To find an upper bound for (a), note that any choice in case (a) must involve a single edge, and a 2-path starting from a specified vertex. By Lemma~\ref{lem:2-paths}, the number of choices for (a) then is upper bounded by $3\cdot o(L_n) \cdot L_n = o(L_n^2)$. The number of choices for case (b) is $O(L_n)$ as $|U|=O(1)$, and there are at most $L_n$ ways to choose the other edge which is not restricted to be in $U$. To bound the number of choices for (c), we investigate each case:
\begin{enumerate}
	\item[(C1)] $a$ or $b$ is equal to $x$ or $y$; or $a=b$. In this case, $x,y,a,b$ forms a 2-path. Thus, there are at most $5\cdot n \cdot o(L_n)=o(L_n^2)$ choices (noting that $n=O(L_n)$), where $n$ is the number of ways to choose a vertex, and $o(L_n)$ bounds the number of 2-paths starting from this specified vertex;
	\item[(C2)] one of $u$ and $v$ is one of $\left\{a,b,x,y \right\}$. In this case, there is one 2-path starting from $u$ or $v$, and a single edge. Thus, there are at most $8\cdot L_n d_{\max} =o(L_n^2)$ choices, where  $d_{\max}$ bounds the number of ways to choose a vertex adjacent to $u$ or $v$ and $L_n$ bounds the number of ways to choose a single edge.
\end{enumerate}
Thus, the number of invalid choices for  $\{x,a\}$ and $\{y,b\}$ is $o(L_n^2)$, so that the number of forward switchings which can be applied to any $G\in S$ is $(1+o(1))L_n^2$. Thus,
\begin{equation}\label{eq:expfG}
\Exp{f(G)}=L_n^2(1+o(1)).
\end{equation}

Given a graph $G'\in\bar S$, consider the backward switchings that can be applied to $G'$. There are at most $L_n(d_u-|U_u|)(d_v-|U_v|)$ ways to do the backward switching, since we are choosing an edge which is adjacent to $u$ but not in $U$, an edge which is adjacent to $v$ but not in $U$, and another ``oriented'' edge $\{a,b\}$ (oriented in the sense that each edge has two ways to specify its end vertices as $a$ and $b$). For a lower bound, we consider the following forbidden choices:
\begin{enumerate}[label=(\alph*)]
	\item[(a$'$)] at least one of $\{x,a\}$ or $\{y,b\}$ is an edge,
	\item[(b$'$)] $\{a,b\}\in U$,
	\item[(c$'$)] any vertices overlap other than $x=y$ (i.e. if $\{a,b\}\cap \{u,v,x,y\}\neq \varnothing$).
\end{enumerate}
For (a$'$), suppose that $\{x,a\}$ is present, giving the two-path $\{x,a\},\{a,b\}$ in $G' $. There are at most $(d_u-|U_u|)(d_v-|U_v|)$ ways to choose $x$ and $y$. Given any choice for $x$ and $y$, there are at most $o(L_n)$ ways to choose a 2-path starting from $x$, and hence $o(L_n)$ ways to choose $a,b$. Thus, the total number of choices is at most $o((d_u-|U_u|)(d_v-|U_v|)L_n)$. The case that $\{y,b\}$ is an edge is symmetric. 

For (b$'$), there are $O(1)$ choices for choosing $\{a,b\}$ since $|U|=O(1)$, and at most $(d_u-|U_u|)(d_v-|U_v|)$ choices $x$ and $y$. Thus, the number of choices for case (b$'$) is $O((d_u-|U_u|)(d_v-|U_v|))=o((d_u-|U_u|)(d_v-|U_v|)L_n)$. 

For (c$'$), the case that $a$ or $b$ is equal to $x$ or $y$ corresponds to a 2-path starting from $u$ or $v$ together with a single edge from $u$ or $v$. Since $o(L_n)$ bounds the number of 2-paths starting from $u$ or $v$ and $d_u-|U_u|+d_v-|U_v|$ bounds the number of ways to choose the single edge, there are $o(L_n(d_v-|U_v|)) + o(L_n(d_u-|U_u|))$ total choices. If $a$ or $b$ is equal to $u$ or $v$, there are $(d_u-|U_u|)(d_v-|U_v|)$ ways to choose $x$ and $y$, and at most $d_u+d_v$ ways to choose the last vertex as a neighbor of $u$ or $v$. Thus, there are $O((d_u-|U_u|)(d_v-|U_v|) d_{\max})=o((d_u-|U_u|)(d_v-|U_v|)L_n)$ total choices, since $d_{\max} = O( n^{{1}/{(\tau-1)}} ) = o(n) = o(L_n)$. This concludes that the number of backward switchings that can be applied to any graph $G'\in S'$ is $(d_u-|U_u|)(d_v-|U_v|)L_n(1+o(1))$, so that also
\begin{equation}\label{eq:bG}
\Exp{b(G')}=(d_u-|U_u|)(d_v-|U_v|)L_n(1+o(1)).
\end{equation}

Combining~\eqref{eq:SSprime},~\eqref{eq:expfG} and~\eqref{eq:bG} results in
\[
{|\bar S|}/{|S|} = (1+o(1))\frac{L_n^2}{(d_u-|U_u|)(d_v-|U_v|)L_n},
\] 
and thus~\eqref{eq:pxuvS} yields
\[
\Prob{u\sim v \mid E_U} = \frac{1}{1+{|\bar{S}|}/{|S|}} = (1+o(1))\frac{(d_u-|U_u|)(d_v-|U_v|)}{L_n+(d_u-|U_u|)(d_v-|U_v|)}. 
\]
\end{proof}

	\section{Proof of Theorem~\ref{thm:triang}}\label{sec:prooftriang}

	In this section, we use Lemma~\ref{lem:edgeProb} to prove Theorem~\ref{thm:triang}. 
	Let
	\begin{equation}
B_n(\varepsilon)=\left\{u:d_u\in [\varepsilon\sqrt{\mu n},\sqrt{\mu n}/\varepsilon]\right\}
\end{equation} 
for some fixed $\varepsilon>0$. 
Define $X_{uvw}=1_{\{u\sim v, v\sim w, u\sim w\}}$ and
\[
\triangle_G(B_n(\varepsilon))=\sum_{u,v,w\in B_n(\varepsilon)} X_{uvw}, \quad \triangle_G(\bar{B}_n(\varepsilon)) = \sum_{u\notin B_n(\varepsilon)} \sum_{v,w\in[n]} X_{uvw}.
\]
Thus,  $\triangle_G(B_n(\varepsilon))$ denotes the number of triangles in a graph $G$ where all degrees of the vertices in the triangle are in $B_n(\varepsilon)$ and $\triangle_{G}(\bar{B}_n(\varepsilon))$ denotes the number of triangles where at least one of the vertices does not have its degree in $B_n(\varepsilon)$. 
Then, the following lemma bounds $\triangle_G(\bar{B}_n(\varepsilon))$:
	
	\begin{lemma}\label{lem:boundcontr}
	Let $\tau\in (2,3)$ and $\boldsymbol{d}_n$ be a degree sequence on $n$ vertices satisfying Assumption~\ref{ass:degrees}. Let $G_n$ be a uniformly sampled graph from $\mathcal{G}(\boldsymbol{ d}_n)$. Then,
		\begin{equation}
			\lim_{\varepsilon\downarrow 0}\limsup_{n\to\infty}\frac{\Exp{\triangle_{G_n}(\bar{B}_n(\varepsilon))}}{n^{\frac{3}{2}(3-\tau)}}=0. 
					\end{equation}
	\end{lemma}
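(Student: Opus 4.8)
The plan is to bound $\Exp{\triangle_{G_n}(\bar{B}_n(\varepsilon))}$ by splitting triangles according to which of the three vertices fall outside $B_n(\varepsilon)$, and to estimate the expected number of such triangles using the edge-probability estimate of Lemma~\ref{lem:edgeProb}. For a triple $\{u,v,w\}$, the probability that all three edges are present is, by three applications of Lemma~\ref{lem:edgeProb} (conditioning successively on the previously revealed edges, which is valid since $|U|=O(1)$ and the corrections only shift each degree by $O(1)$),
\[
\Prob{X_{uvw}=1} = (1+o(1))\,\frac{d_ud_v}{L_n+d_ud_v}\cdot\frac{d_vd_w}{L_n+d_vd_w}\cdot\frac{d_ud_w}{L_n+d_ud_w}\le \frac{d_ud_v}{L_n}\cdot\frac{d_vd_w}{L_n}\cdot\frac{d_ud_w}{L_n}\wedge 1 .
\]
Using $L_n = (1+o(1))\mu n$, it therefore suffices to bound, for the various cases, the sum over the relevant triples of $\min\{1,(d_ud_vd_w)^2/(\mu n)^3\}$, and show that after dividing by $n^{\frac32(3-\tau)}$ the result tends to $0$ as $n\to\infty$ and then $\varepsilon\downarrow 0$. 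By symmetry of the three vertices, it is enough to treat the case where vertex $u$ has $d_u\notin[\varepsilon\sqrt{\mu n},\sqrt{\mu n}/\varepsilon]$, i.e.\ either $d_u<\varepsilon\sqrt{\mu n}$ (the ``low'' case) or $d_u>\sqrt{\mu n}/\varepsilon$ (the ``high'' case), with no constraint on $v,w$.

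The key technical tool is the degree-counting bound coming from Assumption~\ref{ass:degrees}\ref{ass:degreeall}: with vertices ordered $d_1\ge\dots\ge d_n$ we have $d_i\le (Kn/i)^{1/(\tau-1)}$, equivalently the number of vertices of degree $\ge j$ is at most $Knj^{1-\tau}$, and more generally $\sum_i d_i^{\alpha}\mathbbm{1}_{\{d_i\le x\}}$ and $\sum_i d_i^{\alpha}\mathbbm{1}_{\{d_i> x\}}$ can be estimated by comparison with the power law $\bar F(j)\le Kj^{1-\tau}$. Concretely, I will use that for $\alpha>\tau-1$, $\sum_{i:d_i\le x} d_i^{\alpha} = O(n x^{\alpha+1-\tau})$, while for $\alpha<\tau-1$, $\sum_{i:d_i> x} d_i^{\alpha} = O(n x^{\alpha+1-\tau})$ (and for $\alpha<\tau-1$ the full sum $\sum_i d_i^\alpha = O(n)$). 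In the ``high'' case, the dominant contribution should come from $d_u$ just above $\sqrt{\mu n}/\varepsilon$ and $d_v,d_w$ of order $\sqrt{\mu n}$; summing the truncated product over these ranges and using the power-law counts yields a bound of the form $C\,\varepsilon^{\tau-3}\cdot n^{\frac32(3-\tau)}\cdot(\text{something}\to 0)$ — more precisely, because $d_u$ is large the factor $(d_ud_v/L_n)\wedge 1$ etc.\ becomes small for typical $v,w$, and carefully splitting $v,w$ into ranges $d_vd_w\lessgtr L_n$ gives an extra small factor, so the whole expression is $o(n^{\frac32(3-\tau)})$ for each fixed $\varepsilon$. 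In the ``low'' case, $d_u\le\varepsilon\sqrt{\mu n}$ forces two of the edge-probability factors to be $\le d_u d_v/L_n$ and $\le d_u d_w/L_n$ with the small factor $d_u$, so the sum over $v,w$ of the truncated product is bounded by $d_u^2 (\mu n)^{-2}\sum_{v,w} d_v^2 d_w^2 \wedge (\cdots)$; using the power-law tail to evaluate $\sum_v d_v^2\wedge(\mu n/d_u)$-type truncations produces a bound $\le C\varepsilon^{3-\tau} n^{\frac32(3-\tau)}$, which vanishes after letting $\varepsilon\downarrow 0$.

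The main obstacle is the bookkeeping in the ``high'' regime: because the power law has $\tau<3$, the naive sum $\sum_{u,v,w} (d_ud_vd_w)^2/(\mu n)^3$ diverges (it is dominated by the maximal degrees $\sim n^{1/(\tau-1)}\gg\sqrt n$), so one genuinely needs the truncation at $1$ from Lemma~\ref{lem:edgeProb} and a careful partition of the summation region into dyadic blocks according to which pairwise products $d_id_j$ exceed $L_n$, showing in each block that the contribution carries a positive power of $\varepsilon$ or a factor $n^{-\delta}$. I expect to organize this as: (i) reduce to the one-vertex-outside case by symmetry and a union bound over the $3$ positions; (ii) for the high case, bound $\Exp{\triangle}\le \sum_{d_u>\sqrt{\mu n}/\varepsilon} d_u^2\,\Big(\sum_v \min\{1,d_ud_v/L_n\}\,\cdot(\text{factor})\Big)^{?}$ and evaluate each inner sum via the power-law counts, getting a clean power of $n$ times a power of $\varepsilon$; (iii) for the low case, extract the factor $d_u^2$ and sum over $d_u\le\varepsilon\sqrt{\mu n}$ against $\sum_i d_i^{1-\tau}$-type quantities. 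Throughout, the only input about the graph distribution is Lemma~\ref{lem:edgeProb} plus $L_n=(1+o(1))\mu n$; everything else is deterministic summation against the tail bound \eqref{eq:bound}.
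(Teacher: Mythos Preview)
Your overall strategy matches the paper's: bound the triangle probability by a product of three factors $\min(d_id_j/L_n,1)$ via Lemma~\ref{lem:edgeProb}, split the outlier vertex into a ``low'' case $d_u<\varepsilon\sqrt{\mu n}$ and a ``high'' case $d_u>\sqrt{\mu n}/\varepsilon$, and then control each case using only the tail bound~\eqref{eq:bound}. The paper carries this out by rewriting the sum as a triple integral against the empirical distribution $F_n$, integrating by parts to pass to the majorant $x^{-\tau}\,dx$, and then invoking \cite[Lemma~4.2]{hofstad2017} to conclude that the integral over the complement of $[\varepsilon,1/\varepsilon]^3$ is $h(\varepsilon)\,O(n^{\frac32(1-\tau)})$ with $h(\varepsilon)\to 0$. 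Your plan to do the same bookkeeping by direct summation and dyadic blocks is a legitimate alternative that avoids the external citation, at the cost of more case analysis.

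There is, however, a genuine confusion in your high-degree case. You write the bound as $C\varepsilon^{\tau-3}\,n^{\frac32(3-\tau)}\cdot(\text{something}\to 0)$ and then conclude it is $o(n^{\frac32(3-\tau)})$ \emph{for each fixed $\varepsilon$}. This is not correct: for fixed $\varepsilon>0$ the contribution from $d_u>\sqrt{\mu n}/\varepsilon$ is genuinely of order $n^{\frac32(3-\tau)}$, not $o$ of it. The smallness comes only after letting $\varepsilon\downarrow 0$, and the prefactor must be a \emph{positive} power of $\varepsilon$ (morally $\varepsilon^{\tau-1}$ from the count of vertices with $d_u>\sqrt{\mu n}/\varepsilon$), not $\varepsilon^{\tau-3}$ which diverges. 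In other words, the target is $\limsup_n(\cdots)/n^{\frac32(3-\tau)}\le h(\varepsilon)$ with $h(\varepsilon)\to 0$, not an $o(1)$ in $n$. Your step~(ii) (``a clean power of $n$ times a power of $\varepsilon$'') is the right picture; the earlier sentence contradicts it. When you execute the dyadic splitting, make sure each block produces a bound of the form $C_\alpha\,\varepsilon^{\alpha}\,n^{\frac32(3-\tau)}$ with $\alpha>0$, and drop the claim of $n$-decay.

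The low case is cleaner and your sketch there is essentially right: the two edge-factors incident to $u$ are genuinely $\le d_ud_v/L_n$ and $\le d_ud_w/L_n$ (no truncation needed once $d_u\le\varepsilon\sqrt{\mu n}$ is paired with the remaining $\min(d_vd_w/L_n,1)$ and the power-law moment bounds), and summing over $d_u\le\varepsilon\sqrt{\mu n}$ against $d_u^{2}$ produces the positive power $\varepsilon^{3-\tau}$ you state.
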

	\begin{proof}
		Let $F=[0,\infty)^3\setminus[\varepsilon\sqrt{\mu n},\sqrt{\mu n}/\varepsilon]^3$, so that $F$ denotes the area where one of the three degrees is not in $B_n(\varepsilon)$. By~\eqref{eq:bound}, for some $K_1>0$
		\begin{equation}\label{eq:expBbar}
			\begin{aligned}[b]
			&\Exp{\triangle_{G_n}(\bar{B}_n(\varepsilon))} =\sum_{1\leq u<v<w\leq n}\Prob{u\sim v, v\sim w, u\sim w}\ind{u,v \text{ or }w\notin B_n(\varepsilon)}\\
			&\leq K_1\sum_{1\leq u<v<w\leq n}\min\left(\frac{d_ud_v}{L_n},1\right)\min\left(\frac{d_ud_w}{L_n},1\right)\min\left(\frac{d_vd_w}{L_n},1\right)\ind{u,v \text{ or }w\notin B_n(\varepsilon)}\\
			& = K_1n^3\int\int\int_F\min\left(\frac{xy}{L_n},1\right)\min\left(\frac{xz}{L_n},1\right)\min\left(\frac{yz}{L_n},1\right)\dd F_n(x)\dd F_n(y)\dd F_n(z),
			\end{aligned}
		\end{equation}
		where the factor $n^3$ arises from replacing the sum over all sets of three vertices by the average over three uniformly chosen vertices.
		
		
		We first investigate the contribution to the integral from the area where the first entry is in $[0,\varepsilon\sqrt{\mu n}]$. 
		We use that for a bounded, absolutely continuous function $g(x)$ such that $g(0)=0$,
		\begin{equation}\label{eq:gxint}
		\begin{aligned}[b]
			\int_{0}^\infty g(x)\dd F_n(x)& = \int_{0}^\infty g'(x)(1-F_n(x))\dd x \leq K\int_{0}^{\infty}g'(x)x^{1-\tau}\dd x\\
			& =K\int_{0}^{\infty}(\tau-1)g(x)x^{-\tau}\dd x +K\big[ g(x)x^{1-\tau}\big]_0^\infty \\
			& = K(\tau-1)\int_{0}^{\infty}g(x)x^{-\tau}\dd x,
			\end{aligned}
		\end{equation} 
		where $g'(x)$ denotes a function such that $\int_0^xg'(y)\dd y=g(x)$, and where we used Assumption~\ref{ass:degrees}\ref{ass:degreeall}.
		Thus, for some $K_2>0$, the integral in~\eqref{eq:expBbar} can be bounded by
			\begin{align}
			& K_2\int_0^{\varepsilon\sqrt{\mu n}}\int_0^\infty\int_0^\infty (xy)^{-\tau}\min\left(\frac{xy}{L_n},1\right)\min\left(\frac{xz}{L_n},1\right)\min\left(\frac{yz}{L_n},1\right)\dd x\dd y\dd F_n(z) \\
			\end{align}

			
			Furthermore, for all non-decreasing $g$ that are bounded on $[0,\varepsilon \sqrt{\mu n}]$ and once differentiable,
			\begin{align}
			\int_{0}^{\varepsilon\sqrt{\mu n}}g(x)\dd F_n(x)&= \int_{0}^{\varepsilon\sqrt{\mu n}}\int_0^{x}g'(y)\dd y\dd F_n(x)\nonumber\\
			& =  \int_{0}^{\varepsilon\sqrt{\mu n}}(F_n(\varepsilon\sqrt{\mu n})-F_n(y))g'(y)\dd y\nonumber\\
			& \leq   K\int_{0}^{\varepsilon\sqrt{\mu n}}y^{1-\tau}g'(y)\dd y\nonumber\\
			& =K(\tau-1) \int_{0}^{\varepsilon\sqrt{\mu n}}y^{-\tau}g(y)\dd y +g(y)y^{1-\tau}\big]_0^{\varepsilon\sqrt{\mu n}}\nonumber\\
			& \leq K(\tau-1) \int_{0}^{\varepsilon\sqrt{\mu n}}y^{-\tau}g(y)\dd y +g(\varepsilon\sqrt{\mu n})(\varepsilon\sqrt{\mu n})^{1-\tau}
			\end{align}
			In this case we take
			\begin{equation}\label{eq:gn}
			g_n(z)=g(z)=\int_0^\infty\int_0^\infty (xy)^{-\tau}\min\left(\frac{xy}{L_n},1\right)\min\left(\frac{xz}{L_n},1\right)\min\left(\frac{yz}{L_n},1\right)\dd x\dd y 
			\end{equation}
			Thus, using that $g(\varepsilon\sqrt{\mu n}) = O(n^{1-\tau}\varepsilon^{2\tau-2})$,
				\begin{align}
			& \int_0^{\varepsilon\sqrt{\mu n}}\int_0^\infty\int_0^\infty (xy)^{-\tau}\min\left(\frac{xy}{L_n},1\right)\min\left(\frac{xz}{L_n},1\right)\min\left(\frac{yz}{L_n},1\right)\dd x\dd y\dd F_n(z) \nonumber\\
			& \leq K_3 \int_0^{\varepsilon\sqrt{\mu n}}\int_0^\infty\int_0^\infty (xyz)^{-\tau}\min\left(\frac{xy}{L_n},1\right)\min\left(\frac{xz}{L_n},1\right)\min\left(\frac{yz}{L_n},1\right)\dd x\dd y\dd z\nonumber\\
			& \quad +O(n^{\frac{3}{2}(1-\tau)})\varepsilon^{\tau-1}\nonumber\\
			& = h(\varepsilon)\bigO{n^{\frac{3}{2}(1-\tau)}},
			\end{align}
			for some function $h(\varepsilon)$ such that $\lim_{\varepsilon\downarrow 0}h(\varepsilon)=0$ and some $K_3>0$. The last equality follows from~\cite[Lemma 4.2]{hofstad2017}.  Multiplying by $n^3$ as in~\eqref{eq:expBbar} then proves that the contribution to~\eqref{eq:expBbar} where at least one of the vertices has degree smaller than $\varepsilon\sqrt{n}$ is small.

			We next investigate the contribution to~\eqref{eq:expBbar} where the first entry is in $[\sqrt{\mu n}/\varepsilon,\infty)$. Again using~\eqref{eq:gxint}, we can write for some $K_2>0$
			\begin{align}\label{eq:intFnhigh}
			& \int_{\sqrt{\mu n}/\varepsilon}^\infty\int_0^\infty\int_0^\infty \min\left(\frac{xy}{L_n},1\right)\min\left(\frac{xz}{L_n},1\right)\min\left(\frac{yz}{L_n},1\right)\dd F_n(x)\dd F_n(y)\dd F_n(z)\nonumber\\
			&\leq K_2 \int_{\sqrt{\mu n}/\varepsilon}^\infty\int_0^\infty\int_0^\infty (xy)^{-\tau}\min\left(\frac{xy}{L_n},1\right)\min\left(\frac{xz}{L_n},1\right)\min\left(\frac{yz}{L_n},1\right)\dd x\dd y\dd F_n(z)\nonumber\\
			&= K_2\Exp{g_n(D_n)\ind{D_n>\sqrt{\mu n}/\varepsilon}},
			\end{align}
			where $g_n$ is as in~\eqref{eq:gn} and $D_n\sim F_n$. Since $g_n$ is non-decreasing, we get 
			\begin{equation}
			\Exp{g_n(D_n)\ind{D_n>\sqrt{\mu n}/\varepsilon}}\leq \Exp{g_n(\bar{D})\ind{\bar{D}>\sqrt{\mu n}/\varepsilon}},
			\end{equation}
			where $\Prob{\bar{D}>x}=\min(Kx^{1-\tau},1$, as in Assumption~\ref{ass:degreesck}\ref{ass:degreerangeck}. 
	Thus, the contribution in~\eqref{eq:intFnhigh} can be bounded by
	\begin{align}
	 & \int_{\sqrt{\mu n}/\varepsilon}^\infty\int_0^\infty\int_0^\infty (xyz)^{-\tau}\min\left(\frac{xy}{L_n},1\right)\min\left(\frac{xz}{L_n},1\right)\min\left(\frac{yz}{L_n},1\right)\dd x\dd y\dd z +\varepsilon^{\tau-1}O(n^{\frac{3}{2}(1-\tau)})\nonumber \\
	 & =h(\varepsilon)O(n^{\frac{3}{2}(1-\tau)})+\varepsilon^{\tau-1}O(n^{\frac{3}{2}(1-\tau)}),
	\end{align}
	for some function $h(\varepsilon)$ such that $\lim_{\varepsilon\downarrow 0}h(\varepsilon)=0$, where the last equality follows from~\cite[Lemma 4.2]{hofstad2017}. Again, multiplying by $n^3$ then shows that the contribution to~\eqref{eq:expBbar} from the situation where at least one of the vertices has degree larger than $\sqrt{n}/\varepsilon$ is sufficiently small.			
	\end{proof}

We now investigate the expected value of $\triangle_{G_n}(B_n(\varepsilon))$:
\begin{lemma}\label{lem:expcond}
	Let $\tau\in (2,3)$ and $\boldsymbol{d}_n$ be a degree sequence on $n$ vertices satisfying Assumption~\ref{ass:degrees}. Let $G_n$ be a uniformly sampled graph from $\mathcal{G}(\boldsymbol{ d}_n)$. Then,
	\begin{equation}
	\frac{\Exp{\triangle_{G_n}(B_n(\varepsilon))}}{n^{\frac{3}{2}(3-\tau)}\mu^{-\frac{3}{2}(\tau-1)}}\to \frac{1}{6}(C(\tau-1))^3\int_{\varepsilon}^{1/\varepsilon}\int_{\varepsilon}^{1/\varepsilon}\int_{\varepsilon}^{1/\varepsilon}(xyz)^{-\tau}\frac{xy}{1+xy}\frac{xz}{1+xz}\frac{yz}{1+yz}\dd x\dd y \dd z.
	\end{equation}
\end{lemma}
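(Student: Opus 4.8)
The plan is to compute $\Exp{\triangle_{G_n}(B_n(\varepsilon))}$ directly by expanding the sum over ordered triples $u,v,w\in B_n(\varepsilon)$ and applying Lemma~\ref{lem:edgeProb} with $U=\varnothing$ to estimate $\Prob{u\sim v, v\sim w, u\sim w}$. The first step is to argue that for a triangle on three \emph{distinct} vertices, the events $\{u\sim v\}$, $\{v\sim w\}$, $\{u\sim w\}$ are ``asymptotically independent'' in the sense of Lemma~\ref{lem:edgeProb}: conditioning on one or two of these edges being present changes the probability of the remaining one only by a factor $1+o(1)$, because the correction terms $|U_x|$ in~\eqref{eq:pijon} are $O(1)$ while $d_u,d_v,d_w$ are all of order $\sqrt{n}\to\infty$. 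Hence, writing the triangle probability as a product of three conditional probabilities and applying Lemma~\ref{lem:edgeProb} three times,
\[
\Prob{u\sim v, v\sim w, u\sim w} = (1+o(1))\frac{d_ud_v}{L_n+d_ud_v}\cdot\frac{d_ud_w}{L_n+d_ud_w}\cdot\frac{d_vd_w}{L_n+d_vd_w},
\]
uniformly over $u,v,w\in B_n(\varepsilon)$ since the degrees there are bounded between $\varepsilon\sqrt{\mu n}$ and $\sqrt{\mu n}/\varepsilon$.

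The second step is to handle the diagonal terms: triples where two or three of $u,v,w$ coincide contribute no triangles (or the sum is over distinct triples anyway), and in any case the number of such degenerate triples is $O(n^2)=o(n^{\frac32(3-\tau)}\cdot n^{?})$ — more precisely one checks that these are negligible at the scale $n^{\frac32(3-\tau)}$ since $\frac32(3-\tau)>1$ for $\tau<3$ is not quite enough, but the per-triple probability is $O(1)$ and there are only $O(n^2)$ such pairs, while $n^{\frac32(3-\tau)}\to\infty$ faster than $n^2$ precisely when $\tau<1$, which fails; so instead one notes these degenerate contributions are already excluded because a triangle requires three distinct vertices, and $\triangle_G(B_n(\varepsilon))$ counts ordered distinct triples giving the factor $\tfrac16$. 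The third step converts the sum into an integral: replacing $\sum_{u\in B_n(\varepsilon)}$ by $n\int\dd F_n$ restricted to $[\varepsilon\sqrt{\mu n},\sqrt{\mu n}/\varepsilon]$, we get
\[
\Exp{\triangle_{G_n}(B_n(\varepsilon))} = (1+o(1))\frac{n^3}{6}\int\!\!\int\!\!\int_{[\varepsilon\sqrt{\mu n},\sqrt{\mu n}/\varepsilon]^3}\frac{xy}{L_n+xy}\frac{xz}{L_n+xz}\frac{yz}{L_n+yz}\dd F_n(x)\dd F_n(y)\dd F_n(z).
\]
Then I substitute $x=\sqrt{\mu n}\,s$, $y=\sqrt{\mu n}\,t$, $z=\sqrt{\mu n}\,r$ and use $L_n=(1+\op(1))\mu n$ so that each factor $xy/(L_n+xy)\to st/(1+st)$, and use Assumption~\ref{ass:degrees}\ref{ass:degreerange} in the form~\eqref{eq:gxint}/~\eqref{D-tail}, i.e. $\dd F_n(x)\approx C(\tau-1)x^{-\tau}\dd x$ valid for $x=O(\sqrt n)$, which after the change of variables produces the density $C(\tau-1)(\sqrt{\mu n})^{1-\tau}s^{-\tau}\dd s$ on each coordinate; collecting the powers of $\sqrt{\mu n}$ gives $n^3\cdot(\sqrt{\mu n})^{3(1-\tau)} = n^{\frac32(3-\tau)}\mu^{\frac32(1-\tau)}$, matching the claimed normalization $n^{\frac32(3-\tau)}\mu^{-\frac32(\tau-1)}$.

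The main obstacle is making the passage from the discrete sum against $\dd F_n$ to the continuous integral against $C(\tau-1)x^{-\tau}\dd x$ rigorous \emph{uniformly} over the compact range $[\varepsilon,1/\varepsilon]^3$ (in rescaled variables), where the integrand is bounded and continuous, so dominated convergence applies; one must also confirm the $o(1)$ errors from Lemma~\ref{lem:edgeProb} are uniform over $B_n(\varepsilon)$ (they are, since they depend only on $d_{\max}$, $|U|$ and the lower bound $\varepsilon\sqrt{\mu n}$ on the relevant degrees), and that the boundary terms from the integration-by-parts identity analogous to~\eqref{eq:gxint} vanish because the integrand is bounded on the truncated domain and $(\varepsilon\sqrt{\mu n})^{1-\tau}=o(1)$ at the relevant scale. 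A secondary technical point is that $L_n/n\to\mu$ only in probability, so strictly speaking one either works on the event $\{|L_n-\mu n|\le \delta n\}$ (which has probability $\to1$) or notes that $L_n$ is a deterministic function of the fixed degree sequence $\boldsymbol d_n$ and $L_n=(1+o(1))\mu n$ deterministically under Assumption~\ref{ass:degrees}; either way the convergence of the expectation is clean. Finally, letting the rescaled integration region expand is not needed here since the statement keeps the $\varepsilon$-truncated integral; combining this lemma with Lemma~\ref{lem:boundcontr} and a second-moment or concentration argument (deferred) will yield Theorem~\ref{thm:triang}.
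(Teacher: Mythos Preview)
Your proposal is correct and follows essentially the same route as the paper: decompose $\Prob{\triangle_{i,j,k}}$ via the chain rule, apply Lemma~\ref{lem:edgeProb} three times (noting that the $|U_x|$ corrections are $O(1)$ while all three degrees are $\Theta(\sqrt{n})$ on $B_n(\varepsilon)$, so the error is $o(1)$ uniformly), and then pass to a Riemann--Stieltjes integral that converges by Assumption~\ref{ass:degrees}\ref{ass:degreerange} and bounded-continuity of the integrand on the compact rescaled box $[\varepsilon,1/\varepsilon]^3$. The paper phrases the last step via the rescaled counting measure $\Mn([a,b])=\mu^{(\tau-1)/2}n^{(\tau-3)/2}\#\{i:d_i\in[a,b]\sqrt{\mu n}\}$ and its weak limit $C(\tau-1)t^{-\tau}\dd t$, which is exactly your change of variables $x=\sqrt{\mu n}\,s$ written as a measure-convergence statement; your digression on ``diagonal terms'' is unnecessary (triangles require three distinct vertices by definition, and the paper's $\tfrac16\sideset{}{'}\sum$ already encodes this), but it does no harm.
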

\begin{proof}
	Let $\triangle_{i,j,k}$ denote the event that a triangle is present between vertices $i,j$ and $k$. Then,
	\begin{equation}\label{eq:probtriang}
		\Prob{\triangle_{i,j,k}}=\Prob{i\sim j}\Prob{j\sim k\mid i\sim j}\Prob{i\sim k \mid i\sim j, j\sim k}.
	\end{equation}
	Let 
	\begin{equation}\label{eq:gfunc}
		g(d_i,d_j,d_k)=\frac{d_id_j}{d_id_j+\mu n}\frac{d_id_k}{d_id_k+\mu n}\frac{d_jd_k}{d_jd_k+\mu n}.
	\end{equation}
	When $i,j,k\in B_n(\varepsilon)$, $d_i,d_j,d_k\gg 1$ and also $d_id_j,d_id_k,d_jd_k=O(n)$, so that we can use~\eqref{eq:pijon} in Lemma~\ref{lem:edgeProb} to conclude
	\begin{equation}\label{eq:pijk}
		\Prob{\triangle_{i,j,k}}=\frac{d_id_j}{d_id_j+L_n}\frac{(d_i-1)d_k}{d_id_k+L_n}\frac{(d_j-1)(d_k-1)}{d_jd_k+L_n}(1+o(1))=g(d_i,d_j,d_j)(1+o(1)).
	\end{equation}
	We then use that
	\begin{align}\label{eq:expcondg}
	\Exp{\triangle_{G_n}(B_n(\varepsilon))} & = \frac{1}{6}\sideset{}{'}\sum_{i,j,k}\Prob{\triangle_{i,j,k}}\ind{i,j,k\in B_n(\varepsilon)} \nonumber\\
	& =(1+o(1)) \frac{1}{6}\sideset{}{'}\sum_{i,j,k}g(d_i,d_j,d_k)\ind{i,j,k\in B_n(\varepsilon)},
	\end{align}
	where $\sideset{}{'}\sum$ denotes the sum over distinct indices.
	
	We then define the measure
	\begin{equation}
	\Mn([a,b])=\mu^{(\tau-1)/2}n^{(\tau-3)/2}\sum_{i\in[n]}\ind{d_i\in[a,b]\sqrt{\mu n}}.
	\end{equation}
	By Assumption~\ref{ass:degrees}\ref{ass:degreerange}, 
	\begin{align}
	n^{-1}\sum_{i\in[n]}\ind{d_i\in[a,b]\sqrt{\mu n}}&=C\sqrt{\mu n}^{1-\tau}(a^{1-\tau}-b^{1-\tau})(1+o(1))\nonumber\\
	& =C(\tau-1)\sqrt{\mu n}^{1-\tau}\int_{a}^{b}t^{-\tau}\dd t(1+o(1)),
	\end{align}
	 so that
 	\begin{equation}\label{eq:measconv}
 	\Mn([a,b])\to C(\tau-1)\int_{a}^{b}t^{-\tau}\dd t =:\lambda([a,b]).
 	\end{equation}
 	Furthermore, we can write 
 	\begin{align}
 	& \frac{\sum_{1\leq i<j<k\leq n}g(d_i,d_j,d_k)\ind{i,j,k\in B_n(\varepsilon)}}{n^{\frac{3}{2}(3-\tau)}\mu^{-\frac{3}{2}(\tau-1)}}\nonumber\\
 	& =\frac{1}{6}\int_{\varepsilon}^{1/\varepsilon}\int_{\varepsilon}^{1/\varepsilon}\int_{\varepsilon}^{1/\varepsilon} g(t_1,t_2,t_3)\dd \Mn(t_1)\dd \Mn(t_2)\dd \Mn(t_3)
 	\end{align}
	Because the function $g(t_1,t_2,t_3)$ is a bounded, continuous function on $[\varepsilon,1/\varepsilon]^3$, 
	\begin{equation}\label{eq:gconv}
	\begin{aligned}[b]
	& \frac{\sum_{1\leq i<j<k\leq n}g(d_i,d_j,d_k)\ind{i,j,k\in B_n(\varepsilon)}}{n^{\frac{3}{2}(3-\tau)}\mu^{-\frac{3}{2}(\tau-1)}} \to\!\! \frac{1}{6}\int_{\varepsilon}^{1/\varepsilon}\int_{\varepsilon}^{1/\varepsilon}\int_{\varepsilon}^{1/\varepsilon}\!\! g(t_1,t_2,t_3)\dd \lambda(t_1)\dd \lambda(t_2)\dd \lambda(t_3)\\
	&=\frac{(C(\tau-1))^3}{6}\int_{\varepsilon}^{1/\varepsilon}\int_{\varepsilon}^{1/\varepsilon}\int_{\varepsilon}^{1/\varepsilon}(xyz)^{-\tau}\frac{xy}{1+xy}\frac{xz}{1+xz}\frac{yz}{1+yz}\dd x\dd y \dd z.
	\end{aligned}
	\end{equation}
	Combining this with~\eqref{eq:expcondg} proves the lemma.
\end{proof}

We proceed to investigate the variance of $\triangle(B_n(\varepsilon))$. The next lemma shows that the variance of $\triangle(B_n(\varepsilon))$ is with high probability small compared to the square of its expectation. 

\begin{lemma}\label{lem:vartriang}
	Let $\tau\in (2,3)$ and $\boldsymbol{d}_n$ be a degree sequence on $n$ vertices satisfying Assumption~\ref{ass:degrees}. Let $G_n$ be a uniformly sampled graph from $\mathcal{G}(\boldsymbol{ d}_n)$. Then,
	\begin{equation}
		\frac{\Var{\triangle_{G_n}(B_n(\varepsilon))}}{\Exp{\triangle_{G_n}(B_n(\varepsilon))}^2}\to 0
	\end{equation}
\end{lemma}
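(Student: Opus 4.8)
The plan is to compute the second moment $\Exp{\triangle_{G_n}(B_n(\varepsilon))^2}$ and show it equals $\Exp{\triangle_{G_n}(B_n(\varepsilon))}^2(1+o(1))$, which together with Lemma~\ref{lem:expcond} (guaranteeing the expectation is of order exactly $n^{\frac32(3-\tau)}$) gives the claim. Expanding the square, we write
\[
\Exp{\triangle_{G_n}(B_n(\varepsilon))^2}=\sum_{\{u,v,w\}}\sum_{\{u',v',w'\}}\CExp{X_{uvw}X_{u'v'w'}}{i,j,k,i',j',k'\in B_n(\varepsilon)},
\]
where the sums are over (unordered) triples of distinct vertices in $B_n(\varepsilon)$, and classify the pairs of triples according to the size of their intersection: $t=|\{u,v,w\}\cap\{u',v',w'\}|\in\{0,1,2,3\}$. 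For each $t$ the joint probability $\Prob{X_{uvw}=X_{u'v'w'}=1}$ is a probability that a fixed set of at most $6$ edges is present, and since $|B_n(\varepsilon)|$ vertices all have degree $\Theta(\sqrt{n})$ so that all the relevant degree products are $\Theta(n)$, Lemma~\ref{lem:edgeProb} applies with $|U|=O(1)$ to give, for each such configuration,
\[
\Prob{X_{uvw}=X_{u'v'w'}=1}=(1+o(1))\prod_{\{x,y\}}\frac{d_xd_y}{L_n+d_xd_y},
\]
the product ranging over the (distinct) edges in the union of the two triangles.

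\textbf{Main term and error terms.} The $t=0$ contribution is the sum over \emph{ordered} pairs of vertex-disjoint triples; by the product formula it factorizes (up to the $1+o(1)$) into $\bigl(\sum_{\{u,v,w\}}\prod\frac{d_xd_y}{L_n+d_xd_y}\bigr)^2$, which by Lemma~\ref{lem:expcond} is exactly $\Exp{\triangle_{G_n}(B_n(\varepsilon))}^2(1+o(1))$; correcting from ``disjoint ordered pairs'' to ``all ordered pairs'' costs only the $t\ge1$ terms, so it suffices to show those are $o(n^{3(3-\tau)})$. For $t=1$ (two triangles sharing one vertex) the union has $5$ vertices and $6$ edges, so the contribution is
\[
O\Bigl(n^5\,\Exp{\frac{xy}{L_n+xy}}^{\!6}\text{-type sum}\Bigr);
\]
more precisely, summing the product of six edge-factors over $5$ vertices each of degree $\Theta(\sqrt n)$ in $B_n(\varepsilon)$, each edge-factor is $\Theta(1)$, and using $N_k$-counts of order $n\cdot(\sqrt n)^{-(\tau-1)}$ per ``degree slot'' and converting to the measure $\Mn$ as in Lemma~\ref{lem:expcond}, one gets a bound of order $n^5\cdot(n^{(3-\tau)/2})^{5}\cdot(\text{const depending on }\varepsilon)=O(n^{\frac52(3-\tau)})\cdot n^{5-5} $; the point is that this has strictly smaller exponent than $n^{3(3-\tau)}$ precisely because $5<6$, i.e.\ one fewer vertex. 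Similarly $t=2$ gives $4$ vertices and $6$ edges (the two shared vertices are joined, and each triangle contributes two more edges) — wait, more carefully $t=2$ means the two triangles share an edge, so the union has $4$ vertices and $5$ edges, contributing $O(n^{2(3-\tau)})\cdot(\text{something in }\varepsilon)$, again of smaller order; and $t=3$ gives $\Exp{\triangle_{G_n}(B_n(\varepsilon))}=O(n^{\frac32(3-\tau)})$, negligible against its square. In each case one uses the same ``replace sum over vertices by $n\times$ average, then pass to $\dd\Mn$'' device from Lemma~\ref{lem:expcond}, together with the fact that the relevant integrands are bounded and continuous on $[\varepsilon,1/\varepsilon]$-boxes, so all the constants are finite (they may blow up as $\varepsilon\downarrow0$, but $\varepsilon$ is fixed here).

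\textbf{Assembling.} Putting these together,
\[
\Var{\triangle_{G_n}(B_n(\varepsilon))}=\Exp{\triangle_{G_n}(B_n(\varepsilon))^2}-\Exp{\triangle_{G_n}(B_n(\varepsilon))}^2=o\bigl(n^{3(3-\tau)}\bigr)=o\bigl(\Exp{\triangle_{G_n}(B_n(\varepsilon))}^2\bigr),
\]
where the last step uses that by Lemma~\ref{lem:expcond} the expectation is $\Theta_\varepsilon(n^{\frac32(3-\tau)})$ with a strictly positive $\varepsilon$-dependent constant. This is exactly the assertion of the lemma.

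\textbf{Main obstacle.} The delicate point is justifying that Lemma~\ref{lem:edgeProb} may be applied to the joint event that \emph{all} (up to six) edges of the two triangles are present simultaneously with an error that is uniformly $(1+o(1))$ over all the $O(n^6)$ configurations being summed. Lemma~\ref{lem:edgeProb} is stated for a single additional edge conditioned on $|U|=O(1)$ edges; applying it iteratively (peeling off one edge at a time, each time with $|U|\le5$) is fine for each \emph{fixed} configuration, but one must make sure the $o(1)$ is uniform over the configuration — in particular over all choices of the up-to-six vertices in $B_n(\varepsilon)$. This is where restricting to $B_n(\varepsilon)$, i.e.\ to degrees in $[\varepsilon\sqrt{\mu n},\sqrt{\mu n}/\varepsilon]$, is essential: it makes every degree product $\Theta(n)$ with constants depending only on $\varepsilon$, so the $o(1)$ error in Lemma~\ref{lem:edgeProb} (which depends only on $\boldsymbol d$, not on the specific $u,v$) is genuinely uniform. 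A secondary bookkeeping nuisance is keeping track of which edges of the two triangles coincide in the $t=1,2$ cases and making sure the exponent count ($5$ or $4$ free vertices, hence strictly fewer than $6$) is done correctly, but this is routine once the cases are enumerated.
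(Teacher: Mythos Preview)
Your proposal is correct and follows essentially the same route as the paper: decompose the variance by the overlap $t$ (the paper writes $V^{(r)}$ with $r=6-t$ distinct indices), use Lemma~\ref{lem:edgeProb} iteratively to show the $t=0$ contribution is $o(\Exp{\triangle_{G_n}(B_n(\varepsilon))}^2)$, and bound the $t\ge1$ contributions by the vertex count in $B_n(\varepsilon)$. The only difference is that the paper handles the overlap terms more bluntly---it simply bounds each summand by $1$ and uses $|B_n(\varepsilon)|^r\le K^r\varepsilon^{r(1-\tau)}n^{r(3-\tau)/2}=o(n^{3(3-\tau)})$ for $r\le5$---so you need not pass through the measure $\Mn$ or track edge factors for those cases.
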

\begin{proof}
	By Lemma~\ref{lem:expcond}, $\Exp{\triangle_{G_n}(B_n(\varepsilon))}=\Theta(n^{\frac{3}{2}(3-\tau)})$. Thus, we need to show that the variance scales as $\op(n^{9-3\tau})$.
	We write
	\begin{equation}
		\Var{\triangle_{G_n}(B_n(\varepsilon))}\!=\!\sideset{}{'}\sum_{i,j,k}\sideset{}{'}\sum_{u,v,w}\!\!\left(\Prob{\triangle_{i,j,k}\triangle_{u,v,w}}-\Prob{\triangle_{i,j,k}}\Prob{\triangle_{u,v,w}}\right) \ind{i,j,k,u,v,w\in B_n(\varepsilon)},
	\end{equation}
	where $\sideset{}{'}\sum$ denotes a sum over distinct indices.
	The value of the summand depends on the overlap of the indices $i,j,k$ and $u,v,w$. We denote the contribution to the variance where $r$ indices are distinct by $V^{\sss{(r)}}$. 
	When all 6 indices are different, we obtain similarly to~\eqref{eq:pijk} that
	\begin{equation}
	\Prob{\triangle_{i,j,k}\triangle_{u,v,w}}=g(d_i,d_j,d_k)g(d_u,d_v,d_w)(1+o(1)).
	\end{equation}
	Thus, the contribution to the variance when all 6 indices are different can be bounded by
	\begin{align}
	 V^{\sss{(6)}}
	 & =  \sideset{}{'}\sum_{i,j,k}\sideset{}{'}\sum_{u,v,w}o(g(d_i,d_j,d_k)g(d_u,d_v,d_w))\ind{i,j,k,u,v,w\in B_n(\varepsilon)}\nonumber\\
	 & =o(\Exp{\triangle_{G_n}(B_n(\varepsilon))}^2).
	\end{align}
	When $i=u$, but all other indices are different, we bound the contribution to the variance using Assumption~\ref{ass:degrees}\ref{ass:degreeall} as
	\begin{equation}
	 V^{\sss{(5)}}\leq \sideset{}{'}\sum_{i,j,k,v,w}\ind{i,j,k,v,w\in B_n(\varepsilon)} \leq \bigg(\sum_{i\in[n]}\ind{d_i>\varepsilon\sqrt{n}}\bigg)^5 \leq K^5\varepsilon^{5-5\tau}n^{\frac{5}{2}(3-\tau)}= o(n^{9-3\tau}),
	\end{equation}
	and the other contributions can be bounded similarly.
\end{proof}

We now prove Theorem~\ref{thm:triang} using the above lemmas:
\begin{proof}[Proof of Theorem~\ref{thm:triang}]
	Fix $\varepsilon>0$.
	Applying the Markov inequality together with Lemma~\ref{lem:boundcontr} yields that for all $\delta>0$
	\begin{equation}
		\Prob{\triangle_{G_n}(\bar{B}_n(\varepsilon))>\delta n^{\frac{3}{2}(3-\tau)}}=\bigO{\frac{h(\varepsilon)}{\delta}},
	\end{equation}
	for some $h(\varepsilon)\to 0$ as $\varepsilon\downarrow 0$ so that
	\begin{equation}
		\triangle_{G_n}(\bar{B}_n(\varepsilon))=h(\varepsilon)\bigOp{n^{\frac{3}{2}(3-\tau)}}.
	\end{equation}
	We now focus on $\triangle_{G_n}(B_n(\varepsilon))$. Using Lemma~\ref{lem:vartriang} together with the Chebyshev inequality results in
	\begin{equation}
		\frac{\triangle_{G_n}(B_n(\varepsilon))}{\Exp{\triangle_{G_n}(B_n(\varepsilon))}}\plim 1.
	\end{equation}
	Combining this with Lemma~\ref{lem:expcond} shows that
	\begin{equation}
		\begin{aligned}[b]
		\frac{\triangle_{G_n}(B_n(\varepsilon))}{n^{\frac{3}{2}(3-\tau)}}
		&=(1+\op(1))\mu^{-\frac{3}{2}(\tau-1)} \frac{(C(\tau-1))^3}{6}\int_{\varepsilon}^{1/\varepsilon}\int_{\varepsilon}^{1/\varepsilon}\int_{\varepsilon}^{1/\varepsilon}(xyz)^{-\tau}\\
		& \quad \times \frac{xy}{1+xy}\frac{xz}{1+xz}\frac{yz}{1+yz}\dd x\dd y \dd z.
		\end{aligned}
	\end{equation}
	Thus,
	\begin{align}
	&\frac{T(G_n)}{n^{\frac{3}{2}(3-\tau)}} = 	\frac{\triangle_{G_n}(B_n(\varepsilon))+\triangle_{G_n}(\bar{B}_n(\varepsilon))}{n^{\frac{3}{2}(3-\tau)}}\nonumber\\
	&=(1+\op(1)) \mu^{-\frac{3}{2}(\tau-1)} \frac{(C(\tau-1))^3}{6}\int_{\varepsilon}^{1/\varepsilon}\int_{\varepsilon}^{1/\varepsilon}\int_{\varepsilon}^{1/\varepsilon}(xyz)^{-\tau}\frac{xy}{1+xy}\frac{xz}{1+xz}\frac{yz}{1+yz}\dd x\dd y \dd z\nonumber\\
	& \quad  +o(h(\varepsilon)).
	\end{align}
	Taking the limit of $\varepsilon\downarrow 0$ then proves the convergence in probability statement of Theorem~\ref{thm:triang}.
	
	To prove that the resulting integral is finite, we use that
	\begin{equation}
	\begin{aligned}[b]
	&\int_{0}^{\infty}	\int_{0}^{\infty}	\int_{0}^{\infty}(xyz)^{-\tau}\frac{xy}{1+xy}\frac{xz}{1+xz}\frac{yz}{1+yz}\dd x\dd y \dd z\\
	&  \leq \int_{0}^{\infty}	\int_{0}^{\infty}	\int_{0}^{\infty}(xyz)^{-\tau}(1-\me^{-xy})(1-\me^{-xz})(1-\me^{-yz})\dd x\dd y \dd z .
	\end{aligned}
	\end{equation}
		By~\cite[Lemma 12]{hofstad2017}, the latter integral is finite, concluding the proof of the theorem.
	\end{proof}
	
	\section{Proof of Theorem~\ref{thm:ck}}\label{sec:proofck}
	The proof of Theorem~\ref{thm:ck} follows similar lines as the proof of Theorem~\ref{thm:triang} and again consists of several steps. We first prove Theorem~\ref{thm:ck} for Ranges I, II and IV, and then we show how to adapt the proof for $k=\lceil B\sqrt{n}\rceil$. Let $\triangle_k$ denote the number of triangles where at least one of the vertices has degree $k$. We specify a set $W\subseteq [n]\times [n]$ of ordered pairs of vertices such that the contribution to $\triangle_k$ from triangles where the other two vertices in the triangle fall outside of $W$ is small. We then focus on the number of triangles where one vertex has degree $k$ and the other two vertices are in $W$. We compute the expected number of such triangles, and then use a second moment method to show that the number of such triangles concentrates around its expectation.
	
	More specifically, let 
	\begin{equation}\label{eq:Wnk}
		W_n^k(\varepsilon)=\begin{cases}
		\{(u,v):d_ud_v\in[\varepsilon n,n/\varepsilon]\} & k\leq n^{(\tau-2)/(\tau-1)},\\
		\{(u,v):d_ud_v\in[\varepsilon n,n/\varepsilon], d_u,d_v\leq n/k \} & k>n^{(\tau-2)/(\tau-1)}, k\leq \sqrt{n}, \\
		\{(u,v):d_u,d_v\in[\varepsilon n/k,n/(\varepsilon k)]\} & k> \sqrt{n}.
		\end{cases}
	\end{equation}
	Recall that $N_k=\sum_{i\in[n]}\ind{d_i=k}$ and $X_{uvw}=\ind{\triangle_{u,v,w}}$. Define
	\begin{align}
	c(W_n^k(\varepsilon))& =\frac{2}{N_kk(k-1)}\sum_{(u,v)\in W_n^k(\varepsilon)}\sum_{w:d_w=k}X_{uvw},\\ c(\bar{W}_n^k(\varepsilon))& =\frac{2}{N_kk(k-1)}\sum_{(u,v)\notin W_n^k(\varepsilon)}\sum_{w:d_w=k}X_{uvw}.
	\end{align}
	Thus, $c(W_n^k(\varepsilon))$ denotes the contribution to $c(k)$ from triangles with one vertex of degree $k$ and the other two vertices in $W_n^k(\varepsilon)$, and $c(\bar{W}_n^k(\varepsilon))$ denotes the contribution from all other triangles containing a vertex of degree $k$.

Denote
	\begin{equation}
		f(n,k)=\begin{cases}
		n^{2-\tau}\log(n) & k\leq n^{(\tau-2)/(\tau-1)},\\
		n^{2-\tau}\log(n/k^2) & k>n^{(\tau-2)/(\tau-1)}, k\leq \sqrt{n},  \\
		n^{5-2\tau}k^{2\tau-6} & k> \sqrt{n},
		\end{cases}
	\end{equation}
	which is the scaling of $c(k)$ predicted by Theorem~\ref{thm:ck}. We first bound $c(\bar{W}_n^k(\varepsilon))$, the contribution to $c(k)$ from triangles with degrees outside the ranges in~\eqref{eq:Wnk}:
	
	\begin{lemma}\label{lem:expckminor}
		When $\boldsymbol{d}_n$ satisfies Assumption~\ref{ass:degreesck} for some $\tau\in(2,3)$, then in Ranges I, II and IV,
	\begin{equation}
		\lim_{\varepsilon\downarrow 0}\limsup_{n\to\infty}\frac{\Exp{c(\bar{W}_n^k(\varepsilon))}}{f(n,k)}=0.
	\end{equation}
	\end{lemma}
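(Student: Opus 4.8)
The plan is to mimic the proof of Lemma~\ref{lem:boundcontr}, bounding $\Exp{c(\bar W_n^k(\varepsilon))}$ by an integral against $F_n$ and then estimating the ``bad'' region of that integral. Starting from the definition, for a fixed degree-$k$ vertex $w$ we have
\begin{equation}
\Exp{c(\bar W_n^k(\varepsilon))}=\frac{2}{N_k k(k-1)}\sum_{w:d_w=k}\sum_{(u,v)\notin W_n^k(\varepsilon)}\Prob{\triangle_{u,v,w}}.
\end{equation}
By Lemma~\ref{lem:edgeProb} (with $|U|\le 2$), $\Prob{\triangle_{u,v,w}}\le K_1\min(\tfrac{d_ud_v}{L_n},1)\min(\tfrac{d_uk}{L_n},1)\min(\tfrac{d_vk}{L_n},1)$. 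Writing the sum over $u,v$ as $n^2$ times an integral against $\dd F_n(x)\dd F_n(y)$, and using $N_k\ge 1$ while $k(k-1)\asymp k^2$, I would obtain
\begin{equation}
\Exp{c(\bar W_n^k(\varepsilon))}\le \frac{K_1 n^2}{k^2}\iint_{\bar W}\min\Big(\frac{xy}{L_n},1\Big)\min\Big(\frac{xk}{L_n},1\Big)\min\Big(\frac{yk}{L_n},1\Big)\dd F_n(x)\dd F_n(y),
\end{equation}
where $\bar W$ is the complement of the relevant region in~\eqref{eq:Wnk}. Here I use that $L_n=(1+o(1))\mu n$, and that the pure power law of Assumption~\ref{ass:degreesck}\ref{ass:degreerangeck} holds up to $cn^{1/(\tau-1)}/\log n$, which covers the degree ranges relevant in all of Ranges I, II and IV (the contributing degrees are $O(n/k)$ or, in Range I, up to $n^{1/(\tau-1)}/\log n$).

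Next, as in~\eqref{eq:gxint}, I would convert each $\dd F_n$-integral into a Lebesgue integral against $x^{-\tau}$ using integration by parts and the bound $1-F_n(x)\le Kx^{1-\tau}$ (with the boundary terms contributing lower-order corrections of the type $\varepsilon^{\tau-1}$ times the target scale, as in the proof of Lemma~\ref{lem:boundcontr}). This reduces the estimate to controlling
\begin{equation}
\frac{n^2}{k^2}\iint_{\bar W}(xy)^{-\tau}\min\Big(\frac{xy}{\mu n},1\Big)\min\Big(\frac{xk}{\mu n},1\Big)\min\Big(\frac{yk}{\mu n},1\Big)\dd x\dd y.
\end{equation}
I would then split $\bar W$ into the sub-regions where $d_ud_v<\varepsilon n$, where $d_ud_v>n/\varepsilon$, and (in Ranges II and IV) where $d_u$ or $d_v$ violates the extra constraint. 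On each piece, a change of variables to $x=s\sqrt{\mu n}$, $y=t\sqrt{\mu n}$ (Range I/II) or $x=sn/k$, $y=tn/k$ (Range IV) should turn the integral into $f(n,k)$ times a constant times a function $h(\varepsilon)\to 0$, by comparing with the analogous bounds in~\cite[Lemma 4.2]{hofstad2017} and the $c(k)$-analysis for the erased configuration model in~\cite{hofstad2017c,hofstad2017d}. The appearance of the $\log$ factors in $f(n,k)$ for Ranges I and II comes precisely from integrating $(xy)^{-\tau}\cdot\tfrac{xy}{\mu n}$ over the region $xy\le \mu n$, which produces a logarithm of the ratio between the upper and lower cutoffs; truncating at $\varepsilon n$ or $n/\varepsilon$ changes this logarithm only by an additive constant, hence the factor $h(\varepsilon)\to 0$ after dividing by $f(n,k)$.

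The main obstacle I anticipate is the bookkeeping in Ranges II and IV, where the region $\bar W_n^k(\varepsilon)$ is cut out by \emph{two} kinds of constraints (the product $d_ud_v$ and the individual sizes $d_u,d_v\le n/k$), so the complement splits into several overlapping pieces and one must check that each piece is negligible against $f(n,k)$ after the $\varepsilon$-truncation. In particular, in Range IV one must verify that triangles whose other two vertices have $d_u d_v$ of order $n$ but with $d_u$ much larger than $n/k$ (hence outside $W$) contribute less than $n^{5-2\tau}k^{2\tau-6}$ — this requires carefully exploiting the $\min(\cdot,1)$ truncation in the factor $\min(d_uk/L_n,1)=1$ together with the power-law decay $x^{-\tau}$ for large $x$. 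The estimates in the remaining pieces, and the handling of the boundary terms from the integration-by-parts step, are routine given Lemma~\ref{lem:edgeProb} and the computations already carried out in the proof of Lemma~\ref{lem:boundcontr}.
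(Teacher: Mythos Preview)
Your proposal is correct and follows essentially the same route as the paper: bound the triangle probability via Lemma~\ref{lem:edgeProb}, rewrite the sum as an integral against $F_n$, pass to a Lebesgue integral with density $x^{-\tau}$ via integration by parts and the power-law tail bound, and then defer to the erased-configuration-model estimates in~\cite{hofstad2017c} (the paper cites \cite[Lemma~10]{hofstad2017c} specifically). The only cosmetic difference is that the paper first peels off the contribution from degrees above $cn^{1/(\tau-1)}/\log n$ by a short direct computation (yielding $O(n^{2-\tau}\log\log n)$ in Range~I) before invoking Assumption~\ref{ass:degreesck}\ref{ass:degreerangeck} on the remainder, whereas you propose to use the global upper bound of Assumption~\ref{ass:degrees}\ref{ass:degreeall} throughout---equally valid for an upper-bound lemma.
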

\begin{proof}
Using~\eqref{eq:pijon}, we can write for some $K_1>0$,
	\begin{align}\label{eq:expckbound}
		&\Exp{c(\bar{W}_n^k(\varepsilon))}= \frac{1}{N_k}\sum_{i:d_i=k}\sum_{(u,v)\notin W_n^k(\varepsilon)}\frac{\Prob{\triangle_{i,u,v}}}{k(k-1)}\nonumber\\
		& \leq \frac{K_1}{N_k k(k-1)}\sum_{i:d_i=k}\sum_{(u,v)\notin W_n^k(\varepsilon)}\min\Big(\frac{kd_u}{\mu n},1\Big)\min\Big(\frac{kd_v}{\mu n},1\Big)\min\Big(\frac{d_ud_v}{\mu n},1\Big)\nonumber \\
		& =\frac{K_1}{k(k-1)} \sum_{(u,v)\notin W_n^k(\varepsilon)}\min\Big(\frac{kd_u}{\mu n},1\Big)\min\Big(\frac{kd_v}{\mu n},1\Big)\min\Big(\frac{d_ud_v}{\mu n},1\Big)\nonumber\\
		& =K_1(n/k)^2\int \int_{H_n(k)}\min\Big(\frac{kx}{\mu n},1\Big)\min\Big(\frac{ky}{\mu n},1\Big)\min\Big(\frac{xy}{\mu n},1\Big)\dd F_n(x)\dd F_n(y),
	\end{align}
	where $H_n(k)\subseteq \mathbb{R}^2$ denotes the region where $(x,y)$ does not satisfy the degree constraints of $W_n^k(\varepsilon)$. We first show that the contribution to the integral from degrees larger than $c n^{1/\tau-1}/\log(n)$ is sufficiently small in Range I. By (\ref{dmax}), $d_{\max}\leq Mn^{1/(\tau-1)}$ for some $M>0$. Therefore, in Range I, $\min(kx/(\mu n),1)=kx/(\mu n)$ for $x\in(0,d_{\max})$, and similarly $\min(ky/L_n,1)=ky/(\mu n)$ for $y\in(0,d_{\max})$. 
	We then write the integral as
	\begin{align}\label{eq:contrdeglarge}
	& \frac{k^2}{(\mu n)^3}\int_{cn^{1/(\tau-1)}/\log(n)}^{Mn^{1/(\tau-1)}}\int_{0}^{n/x}x^2y^2\dd F_n(y)\dd F_n(x)\nonumber\\
	& + \frac{k^2}{(\mu n)^2}\int_{cn^{1/(\tau-1)}/\log(n)}^{Mn^{1/(\tau-1)}}\int_{n/x}^{Mn^{1/(\tau-1)}}xy\dd F_n(y)\dd F_n(x).
	\end{align} 
	We first investigate the contribution of the first integral. By Assumption~\ref{ass:degrees}\ref{ass:degreeall},
	\begin{align}\label{eq:inttemp}
	\int_{0}^{n/x}y^2\dd F_n(y) & = -y^2\big[(1-F_n(y))\big]_0^{n/x}+\int_{0}^{n/x}2y[1-F_n(y)]\dd y\leq  K\int_{0}^{n/x}2y^{2-\tau}\dd y,
	\end{align}
	so that
	\begin{align}
	\frac{k^2}{(\mu n)^3}\int_{cn^{1/(\tau-1)}/\log(n)}^{Mn^{1/(\tau-1)}}\int_{0}^{n/x}x^2y^2\dd F_n(y)\dd F_n(x)
 &\leq 2K \frac{k^2}{n^3}\int_{cn^{1/(\tau-1)}/\log(n)}^{Mn^{1/(\tau-1)}}\int_{0}^{n/x}x^2y^{2-\tau}\dd y\dd F_n(x)\nonumber\\
	& =\frac{2Kk^2}{(3-\tau)n^\tau}\int_{cn^{1/(\tau-1)}/\log(n)}^{Mn^{1/(\tau-1)}}x^{\tau-1}\dd F_n(x).
	\end{align}

	Similarly to~\eqref{eq:inttemp},
	\begin{align}
	&k^2n^{-\tau}\int_{cn^{1/(\tau-1)}/\log(n)}^{Mn^{1/(\tau-1)}}x^{\tau-1}\dd F_n(x)  \nonumber\\
	& =-  k^2n^{-\tau} x^{\tau-1}[1-F_n(x)]_{cn^{1/(\tau-1)}/\log(n)}^{Mn^{1/(\tau-1)}}\!\!+ (\tau-1)k^2n^{-\tau}\!\!\int_{cn^{1/(\tau-1)}/\log(n)}^{Mn^{1/(\tau-1)}}x^{-1}[1-F_n(x)]\dd x \nonumber\\
	& \leq k^2n^{-\tau}\Big[K\Big(\frac{cn^{1/(\tau-1)}}{\log(n)}\Big)^{\tau-1}\Big(\frac{cn^{1/(\tau-1)}}{\log(n)}\Big)^{1-\tau}\Big]+ K(\tau-1)k^2n^{-\tau}\int_{cn^{1/(\tau-1)}/\log(n)}^{Mn^{1/(\tau-1)}}x^{-1}\dd x \nonumber\\
	& = K(\tau-1)k^2n^{-\tau}\int_{cn^{1/(\tau-1)}/\log(n)}^{Mn^{1/(\tau-1)}}x^{-1}\dd x +O(k^2n^{-\tau})\nonumber\\
	& =(\tau-1) k^2n^{-\tau}\log(M\log(n)/c) +O(k^2n^{-\tau}).
	\end{align}
	Multiplying by $n^{2}k^{-2}$ then shows that the contribution of this integral to~\eqref{eq:expckbound} is smaller than $n^{2-\tau}\log(n)$, as required.
	A similar computation shows that the contribution from the second integral in~\eqref{eq:contrdeglarge} is $O(n^{2-\tau}\log(\log(n)))$ as well.	
	To show that the contribution to~\eqref{eq:expckbound} from vertices with degrees larger than $n^{1/(\tau-1)}/\log(n)$ is sufficiently small in Ranges II, III and IV, we use similar computations. 
	
	Let $H_n'(k)\subseteq H_n(k)$ denote the region where $(x,y)$ does not satisfy the degree constraints of $W_n^k(\varepsilon)$, while also $x,y\leq cn^{1/(\tau-1)}/\log(n)$. By our previous arguments, we only need to show that the integral in~\eqref{eq:expckbound} over $H'_n(k)$ is sufficiently small. Note that the integrand is always a power of $x$,$y$ and $z$, where the power depends on the range of the integral. Furthermore, Assumption~\ref{ass:degreesck}\ref{ass:degreerangeck} holds over the entire range of the integral.
	 By Assumption~\ref{ass:degreesck}\ref{ass:degreerangeck}, for $a\ll b\leq c n^{1/(\tau-1)}\log(n)$, $a^\gamma(1-F_n(a))-b^\gamma(1-F_n(b))\leq 0$ for $n$ sufficiently large and $\gamma>1-\tau$. Then, for $n$ sufficiently large and $\gamma>1-\tau$,
	 \begin{align}
	 \int_{a}^{b}x^\gamma \dd F_n(x)& = \big[-x^\gamma(1-F_n(x))\big]_{a}^{b}+\gamma \int_{a}^{b}x^{\gamma-1} (1-F_n(x))\dd x\nonumber\\
	 & \leq \gamma K \int_{a}^{b}x^{\gamma-\tau}\dd x.
	 \end{align}
	 Following the computations in~\cite[Lemma 10]{hofstad2017c}, we can see that indeed the integral over $H'_n(k)$ splits into integrals over ranges $[a,b]$ such that $a\ll b$ and where the integrand equals $x^\gamma$ for some $\gamma>\tau-1$. 
	 Therefore,	we can bound the integral over $H'_n(k)$ for $n$ sufficiently large as
	\begin{align}\label{eq:expckbound2}
	&\int \int_{H_n'(k)}\min\Big(\frac{kx}{\mu n},1\Big)\min\Big(\frac{ky}{\mu n},1\Big)\min\Big(\frac{xy}{\mu n},1\Big)\dd F_n(x)\dd F_n(y)\nonumber\\
	& \leq K_3	\int \int_{H_n'(k)}(xy)^{-\tau}\min\Big(\frac{kx}{\mu n},1\Big)\min\Big(\frac{ky}{\mu n},1\Big)\min\Big(\frac{xy}{\mu n},1\Big)\dd x\dd y
	\end{align}
	for some $K_3>0$.	
	This is the same upper bound used in~\cite[Lemma 10]{hofstad2017c}, so that we can follow the proof of~\cite[Lemma 10]{hofstad2017c} to prove the lemma.	
\end{proof}

We now investigate the expected contribution of vertices in $W_n^k(\varepsilon)$ to $c(k)$:
%
\begin{lemma}\label{lem:expckint}
	Let $\boldsymbol{d}_n$ satisfy Assumption~\ref{ass:degreesck} for some $\tau\in(2,3)$. Let 
	\begin{equation}
		A(\varepsilon)= \int_{\varepsilon}^{1/\varepsilon}\frac{t^{2-\tau}}{1+t}\dd t.
	\end{equation}
	Then,
	\begin{equation}
		\frac{\Exp{c(W_n^k(\varepsilon))}}{f(n,k)}\to
		\begin{cases}
		(C(\tau-1))^2\mu^{-\tau}\frac{3-\tau}{\tau-1}A(\varepsilon)& \text{for }k \text{ in Range I,}\\
		(C(\tau-1))^2\mu^{-\tau}A(\varepsilon) & \text{for } k \text{ in Range II,}\\
		(C(\tau-1))^2\mu^{3-2\tau}\frac{3-\tau}{\tau-1}A(\varepsilon)^2 & \text{for }k \text{ in Range IV.}
		\end{cases}
	\end{equation}
\end{lemma}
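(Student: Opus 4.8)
The plan is to compute the expectation $\Exp{c(W_n^k(\varepsilon))}$ by using Lemma~\ref{lem:edgeProb} to replace the triangle probability $\Prob{\triangle_{i,u,v}}$ (for $i$ of degree $k$ and $(u,v)\in W_n^k(\varepsilon)$) by the product form $g(k,d_u,d_v)$ up to a $(1+o(1))$ factor, exactly as in~\eqref{eq:pijk}. The key point is that on $W_n^k(\varepsilon)$ all pairwise degree products are $O(n)$, so the connection probabilities are well-approximated by $\frac{k d_u}{\mu n+ k d_u}$ etc. Summing over $(u,v)$, this reduces the problem to estimating a double integral against $\dd F_n \times \dd F_n$, which by Assumption~\ref{ass:degreesck}\ref{ass:degreerangeck} (the pure power law) converges, after the correct rescaling, to an explicit double integral against the limiting measure $\lambda$ from~\eqref{eq:measconv}.

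First I would fix a range and write $\Exp{c(W_n^k(\varepsilon))} = \frac{1}{k(k-1)}\sum_{(u,v)\in W_n^k(\varepsilon)} g(k,d_u,d_v)(1+o(1))$. For Range I ($k\ll n^{(\tau-2)/(\tau-1)}$), on $W_n^k(\varepsilon)$ we have $kd_u,kd_v\leq k \cdot n/(\varepsilon k)\cdot$-type bounds forcing $kd_u, kd_v = o(n)$, so $\min(kd_u/(\mu n),1)=kd_u/(\mu n)(1+o(1))$, and the summand simplifies to $\frac{1}{k(k-1)}\cdot\frac{kd_u}{\mu n}\cdot\frac{kd_v}{\mu n}\cdot\frac{d_ud_v}{\mu n + d_ud_v}$. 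I would then substitute $d_u = s_u, d_v = s_v$ and perform the change of variables appropriate to the constraint $d_ud_v \in [\varepsilon n, n/\varepsilon]$: set $d_u = x$, $d_v = t n/x$ with $t\in[\varepsilon,1/\varepsilon]$ and $x$ ranging over the degrees. The $x$-integral, after using $\dd F_n(x) \approx C(\tau-1) x^{-\tau}\dd x$, produces a factor $\int x^{-\tau}\cdot x \cdot (n/x)\cdot(\dots)\dd x$ whose $x$-dependence yields the power of $\log n$ in Range I (the integrand in $x$ behaves like $1/x$ over the admissible range $[\text{const}, n/\text{const}]$, giving $\log n$), while the $t$-integral gives precisely $\int_\varepsilon^{1/\varepsilon}\frac{t^{2-\tau}}{1+t}\dd t = A(\varepsilon)$ together with the prefactor $(C(\tau-1))^2\mu^{-\tau}\frac{3-\tau}{\tau-1}$. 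For Range II the only change is the extra constraint $d_u,d_v\leq n/k$, which truncates the $x$-integral to $[\text{const}\cdot k, n/(\text{const}\cdot k)]$, replacing $\log n$ by $\log(n/k^2)$ and removing the $\frac{3-\tau}{\tau-1}$ factor. For Range IV ($k\gg\sqrt n$), on $W_n^k(\varepsilon)$ we have $d_u,d_v \asymp n/k \ll \sqrt n$, so $kd_u \asymp n$ and $\min(kd_u/(\mu n),1)$ is bounded away from $0$ and $1$ — this is where the factor stays genuinely nonlinear — while $d_ud_v \asymp (n/k)^2 = o(n)$ so $\min(d_ud_v/(\mu n),1) = d_ud_v/(\mu n)(1+o(1))$; rescaling $d_u = (n/k)s_u$, $d_v=(n/k)s_v$ and using the power law turns the double sum into a product of two one-dimensional integrals each contributing $A(\varepsilon)$, together with the power $n^{5-2\tau}k^{2\tau-6}$ and prefactor $(C(\tau-1))^2\mu^{3-2\tau}\frac{3-\tau}{\tau-1}$.

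In each case I would make the convergence of the rescaled Riemann sums rigorous exactly as in Lemma~\ref{lem:expcond}: introduce a rescaled empirical measure (analogous to $\Mn$) that converges weakly to a power-law measure on the relevant compact interval by Assumption~\ref{ass:degreesck}\ref{ass:degreerangeck}, note that the relevant integrand is bounded and continuous on that compact set, and conclude convergence of the integral. The boundary/tail parts (degrees near $d_{\max}$, or degree products near the edges of the admissible window) are controlled by the crude upper bounds of Assumption~\ref{ass:degrees}\ref{ass:degreeall}, just as in Lemma~\ref{lem:expckminor}.

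The main obstacle I anticipate is bookkeeping the logarithmic factors correctly: unlike Theorem~\ref{thm:triang}, where the dominant contribution comes from a compact window of degrees $\asymp\sqrt n$ and the limit is a finite constant, here in Ranges I and II the dominant contribution is spread out over a logarithmically wide window of degrees (from $\asymp 1$ up to $\asymp n$ in Range I), so one must carefully isolate the $\log$-divergent part of the $x$-integral from the $O(1)$ corrections and check that the weak-convergence argument still applies after factoring out $\log n$ (equivalently, that the contribution is genuinely $\sim c\log n$ and not, say, $c\log n + o(\log n)$ with a hidden larger error). Keeping straight which of the three $\min(\cdot,1)$ factors is asymptotically linear and which is asymptotically $1$ in each range — and verifying the claimed constraints force exactly that dichotomy — is the part that requires care; the substitutions themselves and the resulting one- and two-dimensional integrals are routine once the right window and rescaling are identified.
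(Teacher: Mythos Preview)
Your plan matches the paper's proof closely: the paper also starts from Lemma~\ref{lem:edgeProb} to get~\eqref{eq:expecmajor}, then simplifies in each range according to which of $kd_u$, $kd_v$, $d_ud_v$ is $o(n)$, arriving at~\eqref{eq:expecmajorI} in Range~I and~\eqref{eq:expecmajorIII} in Range~IV. The difference is that the paper does not carry out your substitution $(x,t)=(d_u,d_ud_v/(\mu n))$ explicitly; instead it observes that the resulting sums are identical to those analysed for the erased configuration model in~\cite[Lemmas~6--8]{hofstad2017c}, with $1-\me^{-xy/L_n}$ replaced by $xy/(L_n+xy)$, and quotes those lemmas for the asymptotics. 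Your direct computation is exactly what those cited lemmas contain.

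One detail to correct in your sketch: in Range~I the admissible range of $x=d_u$ is not $[\text{const},n/\text{const}]$ but rather (for fixed $t$) roughly $[t\mu n/d_{\max},\,d_{\max}]\asymp[n^{(\tau-2)/(\tau-1)},\,n^{1/(\tau-1)}]$, since both $d_u$ and $d_v=t\mu n/d_u$ must lie below $d_{\max}$. The $\int x^{-1}\,\dd x$ over this window therefore gives $\frac{3-\tau}{\tau-1}\log n$ rather than $\log n$, which is precisely where the factor $\frac{3-\tau}{\tau-1}$ in the Range~I constant comes from. Your diagnosis of the main obstacle is correct: the weak-convergence argument of Lemma~\ref{lem:expcond} applies directly only in Range~IV (compact window $[\varepsilon,1/\varepsilon]$ after rescaling by $n/k$), whereas in Ranges~I and~II the $x$-window is not compact and one must split off the $\log$-divergent part first; this is the technical content the paper imports from~\cite{hofstad2017c}.
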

\begin{proof}
	Let $v$ be a uniformly chosen vertex of degree $k$. We first investigate the case where $k$ is in Range I. When $i,j\in W_n^k(\varepsilon)$,  $d_id_j=O(n)$ and $d_i,d_j\gg 1$. Furthermore, by Assumption~\ref{ass:degrees}\ref{ass:degreeall}, $d_{\max}=O(n^{1/(\tau-1)})$, so that also $kd_i=o(n)$ and $kd_j=o(n)$ in Range I. Thus, we may use~\eqref{eq:pijon}, so that we can approximate the probability that triangle $i,j,k$ is present as in~\eqref{eq:pijk}. We then obtain
	\begin{equation}\label{eq:expecmajor}
	\begin{aligned}[b]
	\Exp{c(W_n^k(\varepsilon))}&=\frac{1}{k^2}\sum_{(i,j)\in W_n^k(\varepsilon)}\Prob{\triangle_{i,j,v}}\\
	&=\frac{1}{k^2}\sum_{(i,j)\in W_n^k(\varepsilon)}\frac{d_id_j}{d_id_j+L_n}\frac{d_ik}{d_ik+L_n}\frac{d_jk}{d_jk+L_n}(1+o(1)).
	\end{aligned}
	\end{equation}
	In Range I, when $(i,j)\in W_n^k(\varepsilon)$, $d_ik/L_n=o(1)$ so that $d_ik/(d_ik+L_n)=d_ik(1+o(1))$ and similarly $d_jk/(d_jk+L_n)=d_jk/L_n(1+o(1))$. Thus, in Range I,
		\begin{equation}\label{eq:expecmajorI}
	\Exp{c(W_n^k(\varepsilon))}=\sum_{(i,j)\in W_n^k(\varepsilon)}\frac{1}{L_n^2}\frac{d_i^2d_j^2}{d_id_j+L_n}(1+o(1)).
	\end{equation}
	We now analyze the convergence of this expression similarly as in the proof of~\cite[Lemma 6]{hofstad2017c}. The only differences with the expression in~\cite[Lemma 6]{hofstad2017c} are that~\cite[Lemma 6]{hofstad2017c} contains the term $1-\me^{-d_id_j/L_n}$, whereas~\eqref{eq:expecmajorI} contains the term $d_id_j/(d_id_j+L_n)$ instead, which is also a bounded, continuous function. Furthermore, in~\cite[Lemma 6]{hofstad2017c} the degree sequence satisfies Assumption~\ref{ass:degrees}\ref{ass:degreerange} with high probability, changing convergence in probability of~\cite[Lemma 6]{hofstad2017c} to a deterministic limit, similarly as in~\eqref{eq:measconv}. Thus, a similar analysis as in~\cite[Lemma 6]{hofstad2017c} then results in
	\begin{equation}
	\frac{\Exp{c(W_n^k(\varepsilon))}}{f(n,k)}\to (C(\tau-1))^2\mu^{-\tau}\frac{3-\tau}{\tau-1}A(\varepsilon),
	\end{equation}
	which concludes the proof for Range I.
	
	Similarly, in Range II we analyze~\eqref{eq:expecmajor} using~\cite[Lemma 7]{hofstad2017c}, again replacing the function $1-\me^{-xy/L_n}$ by $xy/(L_n+xy)$ and convergence in probability by convergence. Then applying~\cite[Lemma 7]{hofstad2017c} with this replaced function proves the lemma for Range II.
	
	Finally, in Range IV, when $(i,j)\in W_n^k(\varepsilon)$, $d_id_j=o(n)$ so that $d_id_j/(d_id_j+L_n)=d_id_j/L_n^{-1}(1+o(1))$. Thus,~\eqref{eq:expecmajor} becomes
	\begin{equation}\label{eq:expecmajorIII}
	\Exp{c(W_n^k(\varepsilon))}=\frac{1}{k^2L_n}\sum_{(i,j)\in W_n^k(\varepsilon)}\frac{d_i^2k}{d_ik+L_n}\frac{d_j^2k}{d_jk+L_n}(1+o(1)).
	\end{equation}
	Now applying~\cite[Lemma 8]{hofstad2017c}, replacing the function $1-\me^{-xy/L_n}$ by $xy/(L_n+xy)$ and convergence in probability by convergence again proves the lemma, now for Range IV.
\end{proof}

Finally, we show that the variance of $c(k)$ is small in the major contributing regime:

\begin{lemma}\label{lem:varck}
	When $\boldsymbol{d}_n$ satisfies Assumption~\ref{ass:degreesck} for some $\tau\in(2,3)$, then in Ranges I, II and IV,
	\begin{equation}
		\frac{\Var{c(W_n^k(\varepsilon))}}{\Exp{c(W_n^k(\varepsilon))}^2}\plim 0.
	\end{equation}
\end{lemma}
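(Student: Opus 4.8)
The plan is to follow the same second-moment strategy used in Lemma~\ref{lem:vartriang}: expand the variance as a double sum over triples $(i,j,v)$ and $(i',j',v')$ with $i,j,i',j'$ coming from the relevant set of pairs and $v,v'$ having degree $k$, classify the terms by the overlap pattern of the six indices, and show every class is $o(f(n,k)^2)$. Recall that by Lemma~\ref{lem:expckint}, $\Exp{c(W_n^k(\varepsilon))}=\Theta(f(n,k))$, so it suffices to prove $\Var{c(W_n^k(\varepsilon))}=o(f(n,k)^2)$. Writing $c(W_n^k(\varepsilon))=\tfrac{2}{N_kk(k-1)}\sum_{(u,v)\in W_n^k(\varepsilon)}\sum_{w:d_w=k}X_{uvw}$, we have
\begin{equation}
\Var{c(W_n^k(\varepsilon))}=\Big(\tfrac{2}{N_kk(k-1)}\Big)^2\!\!\sum_{\substack{(u,v)\in W_n^k(\varepsilon)\\ (u',v')\in W_n^k(\varepsilon)}}\sum_{\substack{w,w':\\ d_w=d_{w'}=k}}\!\!\big(\Prob{X_{uvw}X_{u'v'w'}=1}-\Prob{X_{uvw}=1}\Prob{X_{u'v'w'}=1}\big).
\end{equation}

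The key observation, exactly as in Lemma~\ref{lem:vartriang}, is that when all six vertices involved are distinct, Lemma~\ref{lem:edgeProb} (applied with the set $U$ of already-conditioned edges having bounded size) gives $\Prob{X_{uvw}X_{u'v'w'}=1}=g(d_u,d_v,d_w)g(d_{u'},d_{v'},d_{w'})(1+o(1))$, where $g$ is the product-of-edge-probabilities function; here we use that all the relevant degree products are $O(n)$ on the sets $W_n^k(\varepsilon)$, so the error terms in Lemma~\ref{lem:edgeProb} are genuinely $o(1)$. Hence the fully-distinct contribution is $o\big(\Exp{c(W_n^k(\varepsilon))}^2\big)=o(f(n,k)^2)$. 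For the degenerate terms — where some of $\{u,v,w\}$ coincide with some of $\{u',v',w'\}$ — one bounds the covariance term crudely by $\Prob{X_{uvw}X_{u'v'w'}=1}\le \Prob{X_{uvw}=1}=O(g(d_u,d_v,d_w))$ (or even by $1$), and counts how many tuples of each overlap type there are, each index ranging over at most $N_{\ge}:=\#\{i:d_i\ge \varepsilon\sqrt{\mu n}\}=O(\varepsilon^{1-\tau}n^{(3-\tau)/2})$-type bounds in Ranges I--II, and $O(n/k)$-type bounds in Range IV via Assumption~\ref{ass:degreesck}. Since a degenerate overlap kills at least one free index, one loses a factor that is a positive power of $n$ (or of $n/k$), and this must be weighed against the normalisation $1/(N_kk(k-1))^2$ and the prefactors from $g$; the point is that the expectation $\Exp{c(W_n^k(\varepsilon))}$ already ``uses up'' two full triples, so any shared vertex produces a strictly smaller power of $n$, hence $o(f(n,k)^2)$.

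Concretely I would organise the degenerate cases by how many of the three ``$w$-type'' (degree-$k$) vertices are shared and how many of the ``$u,v$-type'' vertices are shared. The case $w=w'$ with all other indices distinct should be handled by summing $g(d_u,d_v,k)g(d_{u'},d_{v'},k)$ over all choices and noting this equals $N_k^{-1}$ times something of order $(N_k f(n,k))^2/N_k$ after dividing out, i.e.\ a factor $1/N_k=o(1)$ smaller than the main term — here one uses $N_k\to\infty$, which follows from Assumption~\ref{ass:degreesck} whenever $k=o(n^{1/(\tau-1)})$ and more care is needed at the top of Range IV (but there $k$-values with $N_k\ge1$ are the ones of interest, and $N_k$ can be as small as $1$; in that regime one instead uses that a shared $w$ already forces $d_w=k$, removing all the freedom, and the surviving sum over $u,v,u',v'$ is exactly $(\Exp{c(W_n^k(\varepsilon))}N_k k(k-1)/2)^2 N_k^{-1}$, again smaller). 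The cases where some $u$ or $v$ coincides with some $u'$ or $v'$ (or with a $w$) are handled by the crude bound plus a counting estimate: fixing the shared vertex and using $1-F_n$ power-law tails to bound the number of remaining choices, exactly the estimates already appearing in the proof of Lemma~\ref{lem:vartriang} and, in greater detail, in~\cite[Lemma 9]{hofstad2017c} and~\cite{hofstad2017}.

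The main obstacle I anticipate is bookkeeping rather than a conceptual gap: there are many overlap patterns (choosing which subset of the six vertices coincides, and how), and in each of the three Ranges the relevant combinatorial bound on the number of admissible tuples differs (in Ranges I, II the constraint is $d_ud_v\asymp n$ together with a tail bound, while in Range IV it is $d_u,d_v\asymp n/k$), so one must verify case-by-case that the power of $n$ (or $n/k$) strictly drops. A secondary subtlety is that $N_k$ need not tend to infinity in Range~IV (indeed we only require $N_k\ge1$), so the ``shared $w$'' argument cannot rely on $1/N_k\to0$; instead, in that case one absorbs the $w=w'$ term into the variance by noting that it corresponds to the variance of a sum over $(u,v)\in W_n^k(\varepsilon)$ of nearly-independent indicators (Lemma~\ref{lem:edgeProb} again), whose own internal variance is lower-order by the same fully-distinct argument applied one level down. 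In all cases the cleanest route is to mirror the structure of~\cite[proof of Lemma 9]{hofstad2017c}, replacing $1-\e^{-xy/L_n}$ by $xy/(L_n+xy)$ and convergence-in-probability by deterministic convergence exactly as was done in Lemmas~\ref{lem:expckminor} and~\ref{lem:expckint}, and invoke that lemma's estimates verbatim wherever the integrals match.
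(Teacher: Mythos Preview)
Your approach is essentially the same as the paper's: expand the variance, split by overlap pattern, use Lemma~\ref{lem:edgeProb} for the non-degenerate part, and cite \cite[Lemma~9]{hofstad2017c} for the rest after the $1-\e^{-x}\mapsto x/(1+x)$ substitution. Two points where you are making life harder than necessary:

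\textbf{The five-distinct-vertex case.} You treat $w=w'$ (shared degree-$k$ vertex, all other indices distinct) as a genuinely degenerate case and try to control it via a factor $1/N_k$, then worry that $N_k$ need not diverge in Range~IV and propose a recursive workaround. This is unnecessary. The paper's observation is simply that when the two triangles share a \emph{vertex but no edge}, all six edges are distinct and Lemma~\ref{lem:edgeProb} still applies verbatim to give $\Prob{\triangle_{i,u,v}\triangle_{i,u',v'}}=g(k,d_u,d_v)g(k,d_{u'},d_{v'})(1+o(1))$, so the covariance is already $o(g\cdot g)$, uniformly. No $N_k$ argument is needed; the $o(1)$ from the conditional-edge-probability lemma does all the work. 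Your ``one level down'' remark is actually this same observation in disguise, so you do arrive at the right place, but the detour through $1/N_k$ is a red herring.

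\textbf{The crude bound for $\le 4$ distinct vertices.} You propose bounding the joint probability by $\Prob{X_{uvw}=1}=O(g(d_u,d_v,d_w))$, i.e.\ a three-factor bound. The paper instead bounds $\Prob{\triangle_{i,u,v}\triangle_{j,w,z}}$ by the full six-factor product $\prod\min(\cdot,1)$ of \emph{all} edge-probability proxies, which is exactly the quantity appearing in \cite[Eq.~(5.61)]{hofstad2017c}. Since your stated plan is to invoke \cite[Lemma~9]{hofstad2017c} verbatim, you need the six-term bound to match that lemma's hypotheses; the three-term bound would force you to redo the case analysis rather than cite it.
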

\begin{proof}
	We can write the variance as
	\begin{equation}\label{eq:varckeq}
	\Var{c(W_n^k(\varepsilon))} = \frac{1}{k^4N_k^2}\sum_{i,j:d_i,d_j=k}\sum_{(u,v),(w,z)\in W_n^k(\varepsilon)}\Prob{\triangle_{i,u,v}\triangle_{j,w,z}}-\Prob{\triangle_{i,u,v}}\Prob{\triangle_{j,w,z}}.
	\end{equation}
	Again, the contribution of the summand depends on the size of $\{i,j,u,v,w,z\}$. We first investigate the case where all 6 indices are different and we denote the contribution by this term as $V^{\sss{(6)}}$. Let $g(x,y,z)$ be the function defined in~\eqref{eq:gfunc}. Since $kd_i$, $kd_j$ and $d_id_j$ are all $O(n)$ when $(i,j)\in W_n^k(\varepsilon)$ as well as $d_i,d_j\gg 1$, we may apply~\eqref{eq:pijon} to obtain
	\begin{align}
		\Prob{\triangle_{i,u,v}}\Prob{\triangle_{j,w,z}}&=g(k,d_u,d_v)g(k,d_w,d_z)(1+o(1))\label{eq:triangtwo}\\
		\Prob{\triangle_{i,u,v}\triangle_{j,w,z}}&=g(k,d_u,d_v)g(k,d_w,d_z)(1+o(1)).\label{eq:triangdouble}
	\end{align}
Note that when $d_i=d_j=k$ and $(u,w),(v,z)\in W_n^k(\varepsilon)$, $g(d_i,d_u,d_v)\in[\varepsilon^2\tilde{f}(n,k),\tilde{f}(n,k)/\varepsilon^2]$ for some function $\tilde{f}(n,k)$ depending on the range of $k$. Therefore, the $o(1)$ terms are uniform in $i,j$ with $d_i=d_j=k$ and $(u,v),(w,z)\in W_n^k(\varepsilon)$.
	We then obtain
	\begin{equation}
		\begin{aligned}
		 V^{\sss{(6)}} &= \frac{1}{k^4N_k^2}\sum_{i,j:d_i,d_j=k}\sum_{(u,v),(w,z)\in W_n^k(\varepsilon)}o\left(g(k,d_u,d_v)g(k,d_w,d_z)\right)=o\left(\Exp{c(W_n^k(\varepsilon))}^2\right),
		\end{aligned}
	\end{equation}
	where the last equality follows from~\eqref{eq:expecmajor}.
	
	When $\{i,j,u,v,w,z\}=5$, there are no overlapping edges of the two triangles involved. Thus, we can use the same estimates as in~\eqref{eq:triangtwo} and~\eqref{eq:triangdouble} to show that this contribution is small.
	We now bound the contributions to~\eqref{eq:varckeq} from $\{i,j,u,v,w,z\}\leq 4$. By Lemma~\ref{lem:expckint} we have to show that these contributions are $o(f(n,k)^2)$. We use that, by~\eqref{eq:bound},
	\begin{equation}
	\begin{aligned}[b]
	\Prob{\triangle_{i,u,v}\triangle_{j,w,z}}& -\Prob{\triangle_{i,u,v}}\Prob{\triangle_{j,w,z}}  \leq \Prob{\triangle_{i,u,v}\triangle_{j,w,z}}\\
	& \leq K_1 \min\left(\frac{d_uk}{L_n},1\right)\min\left(\frac{d_vk}{L_n},1\right)\min\left(\frac{d_ud_v}{L_n},1\right)\\
	& \quad \times\min\left(\frac{d_zk}{L_n},1\right)\min\left(\frac{d_wk}{L_n},1\right)\min\left(\frac{d_wd_z}{L_n},1\right),
	\end{aligned}
	\end{equation}
	for some $K_1>0$. This is the same bound as in~\cite[Eq. (5.61)]{hofstad2017c}, where it is shown that these contributions are $o(f(n,k)^2)$ by using a first moment method. The only difference with our setting is that~\cite[Eq. (5.61)]{hofstad2017c} considers i.i.d.\ degrees sampled from~\eqref{D-tailck}, whereas we assume that the empirical degree distribution converges to~\eqref{D-tailck}. This does not influence the expected value, so that we can follow the proof of~\cite[Lemma 9]{hofstad2017c} to show that these contributions are $o(f(n,k)^2)$, as required.
\end{proof}

We now use the above lemmas to prove Theorem~\ref{thm:ck}:

\begin{proof}[Proof of Theorem~\ref{thm:ck}]
	Fix $\varepsilon>0$. By Lemmas~\ref{lem:expckint} and~\ref{lem:varck},
	\begin{equation}
	\frac{c(W_n^k(\varepsilon))}{f(n,k)}\plim
	\begin{cases}
	(C(\tau-1))^2\mu^{-\tau}\frac{3-\tau}{\tau-1}A(\varepsilon)& \text{for }k \text{ in Range I,}\\
	(C(\tau-1))^2\mu^{-\tau}A(\varepsilon) & \text{for } k \text{ in Range II,}\\
	(C(\tau-1))^2\mu^{3-2\tau}\frac{3-\tau}{\tau-1}A(\varepsilon)^2 & \text{for }k \text{ in Range IV.}
	\end{cases}
	\end{equation}
	Combining this with Lemma~\ref{lem:expckminor} results in
	\begin{equation}\label{eq:ckeps}
		\begin{aligned}[b]
		\frac{c(k)}{f(n,k)}& = \frac{c(W_n^k(\varepsilon))+c(\bar{W}_n^k(\varepsilon))}{f(n,k)}\\
		& \plim \begin{cases}
		(C(\tau-1))^2\mu^{-\tau}\frac{3-\tau}{\tau-1}A(\varepsilon)+O(h(\varepsilon))& \text{for }k \text{ in Range I,}\\
		(C(\tau-1))^2\mu^{-\tau}A(\varepsilon) + O(h(\varepsilon))& \text{for } k \text{ in Range II,}\\
		(C(\tau-1))^2\mu^{3-2\tau}\frac{3-\tau}{\tau-1}A(\varepsilon)^2 +O(h(\varepsilon))& \text{for }k \text{ in Range IV.}
		\end{cases}
		\end{aligned}
	\end{equation}
	We then take the limit of $\varepsilon\downarrow 0$. By~\cite[Eq. 3.194.3]{integrals2015}
	\begin{equation}
	\lim_{\varepsilon\downarrow 0}A(\varepsilon)=\int_0^\infty \frac{t^{2-\tau}}{1+t}\dd t =-\frac{\pi}{\sin(\pi(2-\tau))}=\frac{\pi}{\sin(\pi \tau)},
	\end{equation} 
	which is equal to the constant $A$ of Theorem~\ref{thm:ck}. Therefore, taking the limit of $\varepsilon\downarrow 0$ in~\eqref{eq:ckeps} results in
	\begin{equation}
			\frac{c(k)}{f(n,k)}\plim \begin{cases}
			(C(\tau-1))^2\mu^{-\tau}\frac{3-\tau}{\tau-1}A& \text{for }k \text{ in Range I,}\\
			(C(\tau-1))^2\mu^{-\tau}A & \text{for } k \text{ in Range II,}\\
			(C(\tau-1))^2\mu^{3-2\tau}\frac{3-\tau}{\tau-1}A^2 & \text{for }k \text{ in Range IV,}
			\end{cases}
	\end{equation}
	which proves the theorem for Ranges I,II and IV

We now analyze $c(k)$ for Range III where $k=\lceil B\sqrt{n}\rceil$. Note that Lemmas~\ref{lem:boundcontr} and~\ref{lem:varck} also hold for $k=\lceil B\sqrt{n}\rceil$, following the proofs for $c(k)$ in Range IV. We therefore only need to analyze $\Exp{c(W_n^{B\sqrt{n}}(\varepsilon))}$. Using~\eqref{eq:expecmajor} yields
\begin{equation}
\begin{aligned}[b]
\Exp{c(W_n^{B\sqrt{n}}(\varepsilon))}
&=(1+o(1))\frac{1}{B^2n}\sum_{(i,j)\in W_n^k(\varepsilon)}\frac{d_id_j}{d_id_j+\mu n}\frac{d_iB\sqrt{n}}{d_iB\sqrt{n}+\mu n}\frac{d_jB\sqrt{n}}{d_jB\sqrt{n}+\mu n}.
\end{aligned}
\end{equation}
Define the measure 
\begin{equation}
\Nn([a,b])=\frac{(\mu\sqrt{n})^{\tau-1}}{n} \sum_{i\in[n]}\ind{d_i\in\mu/\sqrt{n} [a,b]}.
\end{equation}
Then, 
\begin{align}
\Exp{c(W_n^{B\sqrt{n}}(\varepsilon))}
&=(1+o(1))\frac{\mu^{2-2\tau}n^{3-\tau}}{B^2n}\int_\varepsilon^{1/\varepsilon} \int_\varepsilon^{1/\varepsilon} \frac{xy}{xy+\mu^{-1}}\frac{xB}{xB+1}\frac{yB}{yB+1}\dd\Nn(x) \dd \Nn(y)\nonumber\\
& = (1+o(1))\mu^{2-2\tau}n^{2-\tau}\int_\varepsilon^{1/\varepsilon}\int_\varepsilon^{1/\varepsilon} \frac{xy}{xy+\mu^{-1}}\frac{x}{xB+1}\frac{y}{yB+1}\dd\Nn(x) \dd \Nn(y).
\end{align}
Using similar techniques as in~\eqref{eq:gconv}, we can prove that
\begin{equation}
\begin{aligned}[b]
\frac{\Exp{c(W_n^{B\sqrt{n}}(\varepsilon))}}{\mu^{2-2\tau}n^{2-\tau}}
&\to (C(\tau-1))^2\!\!\int_\varepsilon^{1/\varepsilon}\!\!\int_\varepsilon^{1/\varepsilon}\!\! \frac{(t_1t_2)^{2-\tau}}{t_1t_2+\mu^{-1}}\frac{1}{t_1B+1}\frac{1}{t_2B+1}\dd t_1 \dd t_2.
\end{aligned}
\end{equation}
Combining this with Lemmas~\ref{lem:boundcontr} and~\ref{lem:varck} and taking the limit of $\varepsilon\downarrow 0$ then proves the theorem for $k=\lceil B\sqrt{n}\rceil$.
\end{proof}

\section{Conclusion}\label{sec:conc}
In this paper, we have studied the number of triangles in uniform random graphs with given degrees, when the degree sequence follows a power-law distribution with degree-exponent $\tau\in(2,3)$. We have shown that the rescaled number of triangles converges in probability to a constant. We have further shown that most triangles occur on vertices of degrees proportional to $\sqrt{n}$. Another interesting conclusion is that uniform random graphs asymptotically contain less triangles than the erased configuration model which is often used to approximate uniform random graphs with a given degree sequence. 

We have also shown that the local clustering coefficient $c(k)$ of uniform random graphs with scale-free degrees behaves differently in three ranges. For small values of $k$, $c(k)$ is independent of $k$. In the second range, $c(k)$ starts to decay slowly in $k$, and in the fourth range $c(k)$ decays as a power of $k$. 

The triangle is an interesting subgraph since it allows one to analyze clustering properties, but counting other subgraphs in uniform graphs with scale-free degrees would also be interesting. We believe that our results easily extend to a wider class of subgraphs, as for the erased configuration model in~\cite[Theorem 2.2]{hofstad2017d}. In this class of subgraphs, most subgraphs are also supported on vertices of degree proportional to $\sqrt{n}$. This class of subgraphs contains for example cliques of any fixed order that is at least three. Extending the results to count subgraphs outside this class would also be an interesting question for further research.

\paragraph{Acknowledgements.}
The work of RvdH and CS was supported by NWO TOP grant 613.001.451.
The work of RvdH is further supported by the NWO Gravitation Networks grant 024.002.003 and the NWO VICI grant 639.033.806. The work of PG is supported by ARC DE170100716 and ARC DP160100835. The work of AS is supported by an Australian Government Research Training Program Scholarship.

	\bibliographystyle{abbrv}
\bibliography{references}

\begin{thebibliography}{10}

\bibitem{bannink2018}
T.~Bannink, R.~van~der Hofstad, and C.~Stegehuis.
\newblock Switch chain mixing times and triangle counts in simple graphs with
  given degrees.
\newblock {\em To appear in Journal of Complex Networks}, 2018.

\bibitem{berry2015}
J.~W. Berry, L.~A. Fostvedt, D.~J. Nordman, C.~A. Phillips, C.~Seshadhri, and
  A.~G. Wilson.
\newblock Why do simple algorithms for triangle enumeration work in the real
  world?
\newblock {\em Internet Mathematics}, 11(6):555--571, 2015.

\bibitem{boguna2003}
M.~Bogu\~n\'a and R.~Pastor-Satorras.
\newblock Class of correlated random networks with hidden variables.
\newblock {\em Phys. Rev. E}, 68:036112, 2003.

\bibitem{B1980}
B.~Bollob{\'a}s.
\newblock A probabilistic proof of an asymptotic formula for the number of
  labelled regular graphs.
\newblock {\em Europ.\ J.\ Combinatorics}, 1:311--316, 1980.

\bibitem{bollobas1980}
B.~Bollob{\'{a}}s.
\newblock A probabilistic proof of an asymptotic formula for the number of
  labelled regular graphs.
\newblock {\em European Journal of Combinatorics}, 1(4):311--316, 1980.

\bibitem{bollobas2015}
B.~Bollob{\'{a}}s and O.~Riordan.
\newblock An old approach to the giant component problem.
\newblock {\em Journal of Combinatorial Theory, Series B}, 113:236--260, 2015.

\bibitem{britton2006}
T.~Britton, M.~Deijfen, and A.~Martin-L\"{o}f.
\newblock Generating simple random graphs with prescribed degree distribution.
\newblock {\em J. Stat. Phys.}, 124(6):1377--1397, 2006.

\bibitem{chung2002}
F.~Chung and L.~Lu.
\newblock The average distances in random graphs with given expected degrees.
\newblock {\em Proc. Natl. Acad. Sci. USA}, 99(25):15879--15882, 2002.

\bibitem{integrals2015}
I.~Gradshteyn and I.~Ryzhik.
\newblock {\em Table of Integrals, Series, and Products}.
\newblock Elsevier, 2015.

\bibitem{greenhill2018}
C.~Greenhill, M.~Isaev, and B.~D. McKay.
\newblock Subgraph counts for dense random graphs with specified degrees.
\newblock {\em \textup{arXiv:1801.09813}}, 2018.

\bibitem{hofstad2009}
R.~{\swap{Hofstad}{van der }}.
\newblock {\em Random {G}raphs and {C}omplex {N}etworks {Vol. 1}}.
\newblock Cambridge University Press, 2017.

\bibitem{hofstad2005}
R.~{\swap{Hofstad}{van der }}, G.~Hooghiemstra, and P.~{\swap{Mieghem}{Van }}.
\newblock Distances in random graphs with finite variance degrees.
\newblock {\em Random Structures \& Algorithms}, 27(1):76--123, 2005.

\bibitem{hofstad2017}
R.~{\swap{Hofstad}{van der }}, P.~van~der Hoorn, N.~Litvak, and C.~Stegehuis.
\newblock Limit theorems for assortativity and clustering in the configuration
  model with scale-free degrees.
\newblock {\em \textup{arXiv:1712.08097}}, 2017.

\bibitem{hofstad2017d}
R.~{\swap{Hofstad}{van der }}, J.~S.~H. van Leeuwaarden, and C.~Stegehuis.
\newblock Optimal subgraph structures in scale-free configuration models.
\newblock {\em \textup{arXiv:1709.03466}}, 2017.

\bibitem{hofstad2017c}
R.~{\swap{Hofstad}{van der }}, J.~S.~H. van Leeuwaarden, and C.~Stegehuis.
\newblock Triadic closure in simple scale-free networks with unbounded degree
  fluctuations.
\newblock {\em Journal of Statistical Physics}, 173(3), 2018.

\bibitem{janson2009}
S.~Janson.
\newblock The probability that a random multigraph is simple.
\newblock {\em Combinatorics, Probability and Computing}, 18(1-2):205--225,
  2009.

\bibitem{janson2014}
S.~Janson.
\newblock The probability that a random multigraph is simple. {II}.
\newblock {\em Journal of Applied Probability}, 51(A):123--137, 2014.

\bibitem{krioukov2012}
D.~Krioukov, M.~Kitsak, R.~S. Sinkovits, D.~Rideout, D.~Meyer, and
  M.~Bogun{\'a}.
\newblock Network cosmology.
\newblock {\em Sci. Rep.}, 2:793, 2012.

\bibitem{maslov2004}
S.~Maslov, K.~Sneppen, and A.~Zaliznyak.
\newblock Detection of topological patterns in complex networks: correlation
  profile of the internet.
\newblock {\em Phys. A}, 333:529 -- 540, 2004.

\bibitem{mckay1981}
B.~D. McKay.
\newblock Subgraphs of random graphs with specified degrees.
\newblock {\em Congressus Numerantium}, 33:213--223, 1981.

\bibitem{mckay2011}
B.~D. McKay.
\newblock Subgraphs of random graphs with specified degrees.
\newblock In {\em Proceedings of the International Congress of Mathematicians
  2010 ({ICM} 2010)}. Published by Hindustan Book Agency ({HBA}), India. {WSPC}
  Distribute for All Markets Except in India, 2011.

\bibitem{mckay1990}
B.~D. McKay and N.~C. Wormald.
\newblock Asymptotic enumeration by degree sequence of graphs of high degree.
\newblock {\em European Journal of Combinatorics}, 11(6):565--580, 1990.

\bibitem{mckay2004}
B.~D. McKay, N.~C. Wormald, and B.~Wysocka.
\newblock Short cycles in random regular graphs.
\newblock {\em The Electronic Journal of Combinatorics}, 11(1):66, 2004.

\bibitem{molloy1995}
M.~Molloy and B.~Reed.
\newblock A critical point for random graphs with a given degree sequence.
\newblock {\em Random Structures \& Algorithms}, 6(2-3):161--180, 1995.

\bibitem{schlauch2015}
W.~E. Schlauch and K.~A. Zweig.
\newblock Influence of the null-model on motif detection.
\newblock In {\em Proceedings of the 2015 {IEEE}/{ACM} International Conference
  on Advances in Social Networks Analysis and Mining 2015 - {ASONAM} 15}. {ACM}
  Press, 2015.

\bibitem{vazquez2002}
A.~V{\'a}zquez, R.~Pastor-Satorras, and A.~Vespignani.
\newblock Large-scale topological and dynamical properties of the internet.
\newblock {\em Phys. Rev. E}, 65:066130, 2002.

\bibitem{wormald1981}
N.~C. Wormald.
\newblock The asymptotic distribution of short cycles in random regular graphs.
\newblock {\em Journal of Combinatorial Theory, Series B}, 31(2):168--182,
  1981.

\bibitem{wormald1999}
N.~C. Wormald.
\newblock Models of random regular graphs.
\newblock {\em London Mathematical Society Lecture Note Series}, pages
  239--298, 1999.

\end{thebibliography}
	
\end{document}